\newcommand{\N}{{\mathbb N}}
\newcommand{\C}{{\mathbb C}}
\newcommand{\Z}{{\mathbb Z}}
\newcommand{\wand}{wandering domain}
\newcommand{\tef}{transcendental entire function}
\newcommand{\nhd}{neighbourhood}
\newcommand{\sconn}{simply connected}
\newcommand{\mconn}{multiply connected}
\newcommand{\lconn}{locally connected}
\newcommand{\sw}{spider's web}
\theoremstyle{plain}
\newtheorem{theorem}{Theorem}[section]
\newtheorem{corollary}[theorem]{Corollary}
\newtheorem*{theorem*}{Theorem}
\newtheorem*{definition*}{Definition}
\newtheorem*{proposition*}{Proposition}
\newtheorem{proposition}[theorem]{Proposition}
\newtheorem{lemma}[theorem]{Lemma}
\theoremstyle{definition}
\theoremstyle{remark}
\newtheorem*{remark}{Remark}
\theoremstyle{problem}
\theoremstyle{example}
\newtheorem{example}[theorem]{Example}
\newenvironment{myindentpar}[1]%
{\begin{list}{}%
         {\setlength{\leftmargin}{#1}}%
         \item[]%
}
{\end{list}}
\begin{document}


\title[Spiders' Webs and Locally Connected Julia Sets]{Spiders' Webs and Locally Connected Julia Sets \\of Transcendental Entire Functions}

\author{J.W. Osborne}
\address{Department of Mathematics and Statistics \\
   The Open University \\
   Walton Hall\\
   Milton Keynes MK7 6AA\\
   UK}
\email{j.osborne@open.ac.uk}



\thanks{2010 {\it Mathematics Subject Classification.}\; Primary 37F10, Secondary 30D05.} 


\begin{abstract}
We show that, if the Julia set of a \tef\ is \lconn, then it takes the form of a \sw\ in the sense defined by Rippon and Stallard.  In the opposite direction, we prove that a \sw\ Julia set is always \lconn\ at a dense subset of buried points.  We also show that the set of buried points (the residual Julia set) can be a \sw.
\end{abstract}

\maketitle

\section{Introduction} 
\label{intro}
\setcounter{equation}{0}

Let $ f $ be a function which is either transcendental entire or rational of degree at least two, and let $ f^n, n = 0, 1, 2 \ldots $, denote the $ n $th iterate of $ f $.  The \textit{Fatou set} $ F(f) $ is defined to be the set of points $ z \in \C $ (or $ z \in \widehat{\C} = \C \cup \lbrace \infty \rbrace $ if $ f $ is rational) such that the family of functions $ \lbrace f^n : n \in \N \rbrace $ is normal in some \nhd\ of $ z $. The complement of $ F(f) $ is called the \textit{Julia set} $ J(f) $. For an introduction to iteration theory and the properties of these sets, see \cite{aB, CG, Mil} for rational maps and \cite{wB93, MNTU} for \tef s.

The Julia set $ J(f) $ often displays considerable geometric and topological complexity.  It is of interest to ask when $ J(f) $ is \lconn\ at some or all of its points, and what other properties then follow.  

Rational maps with \lconn\ Julia sets have been much studied, and several classes of functions are known for which, if the Julia set is connected, then it is also \lconn\ - see, for example, \cite[Chapter 19]{Mil} and \cite{CJY, MH, TY}.  Some analogous results have been obtained for \tef s \cite{BM02, Mor}, but the situation is less well understood.  More details are given in Section \ref{examples}.

In this paper, we explore some links between the local connectedness of $ J(f) $ for a \tef, and a particular geometric form of $ J(f) $ known as a \sw.  Following Rippon and Stallard \cite{RS10a}, we define a set $ E $ to be a \textit{spider's web} if $ E $ is connected, and there exists a sequence $ (G_k)_{k \in \N} $ of bounded, \sconn\ domains such that
\begin{equation}
\label{web}
G_k \subset G_{k+1}  \text{ and } \partial G_k \subset E, \text{ for each } k \in \N, \quad \text{ and } \bigcup_{k \in \N} G_k = \C.
\end{equation}
We sometimes refer to elements of the sequence $ (\partial G_k)_{k \in \N} $ as \textit{loops} in the \sw. 

Our main result is the following. 

\begin{theorem}
\label{lcsw}
Let $ f $ be a \tef\ such that $ J(f) $ is \lconn.  Then $ J(f) $ is a \sw.
\end{theorem}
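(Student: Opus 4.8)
The plan is to produce, essentially directly, a sequence $(G_k)$ of bounded \sconn\ domains as in \eqref{web} with $\partial G_k\subset J(f)$; equivalently, to prove that $J(f)$ is connected and that $F(f)$ has no unbounded component. Given those two facts, one chooses $R_k\to\infty$ with $\{|z|=R_k\}\cap J(f)\neq\emptyset$ and lets $G_k$ be obtained from the disc $\{|z|<R_k\}$ by adjoining every Fatou component meeting its boundary circle and then filling in the bounded complementary components; since every Fatou component is then bounded, $G_k$ is bounded, and since the boundary of the union lies in $J(f)$, so does $\partial G_k$. (If $J(f)=\C$ the theorem is trivial, taking $G_k=\{|z|<k\}$.) So it is enough to establish: $(\mathrm{A})$ $f$ has no \mconn\ Fatou component; and $(\mathrm{B})$ $f$ has no unbounded Fatou component. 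Together with the hypothesis these force every Fatou component to be bounded and \sconn, whence $J(f)$ is connected — a path in $\widehat\C$ between two points of $J(f)\cup\{\infty\}$ can be pushed off each of the (bounded, boundary-connected) Fatou components it crosses.

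For $(\mathrm{A})$: it is classical (see \cite{wB93}) that a \tef\ with disconnected Julia set has a \mconn\ Fatou component, so $(\mathrm{A})$ reduces to showing $J(f)$ is connected. A \mconn\ Fatou component $U$ is a bounded \wand\ whose iterated images eventually surround every point of $\C$ in a nested annular fashion; I would use this, and the fact that each of $U$ and its preimages has a bounded complementary component meeting $J(f)$, to exhibit infinitely many components of $J(f)$ accumulating at a finite point — at which $J(f)$ cannot be \lconn.

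For $(\mathrm{B})$: suppose $U$ is an unbounded Fatou component; by $(\mathrm{A})$ it is \sconn. Since $J(f)$ is unbounded, $\{|z|>R\}\not\subset F(f)$ for every $R$; hence $\partial U$ is unbounded and $\infty$ lies on $\partial U$ in $\widehat\C$. The next step is to show that a Riemann map $\phi\colon\D\to U$ extends continuously to $\overline\D$. This amounts to showing that $\partial U$ (equivalently $\widehat\C\setminus U$) is \lconn, which I would deduce from local connectedness of $J(f)$ together with the simple connectivity of the Fatou components abutting $\partial U$ (so that, near a point of $\partial U$, those components can be absorbed into connected pieces of $\widehat\C\setminus U$). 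By Carathéodory's theorem $\partial U$ is then a Peano continuum in $\widehat\C$ passing through $\infty$; in particular, for every $R$ there is $R'\geq R$ such that $\partial U\cap\{|z|>R'\}$ lies in a single connected subset of $\partial U\cap\{|z|>R\}$, so that $U$ provides an essentially single channel of $F(f)$ out to $\infty$ with a tame boundary near $\infty$. I would then contradict this with the local structure of $J(f)$ near $\infty$ via the blow-up property of the Julia set (see \cite{wB93}): iterating a small connected relative neighbourhood in $J(f)$ of a finite point of $\partial U$ of large modulus — taken to contain a repelling periodic point, these being dense in $J(f)$ — spreads it over all of $J(f)$, which cannot be reconciled with the tameness just established; equivalently, such a channel forces a Cantor-bouquet-type subset of $J(f)$, at whose endpoints local connectedness fails.

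The crux — and the main obstacle — is $(\mathrm{B})$. The subtlety is that the hypothesis provides local connectedness of $J(f)$ only in $\C$, not at $\infty$, so one may not simply invoke local connectedness of $J(f)\cup\{\infty\}$ in $\widehat\C$; the contradiction must be manufactured at finite points and it draws on genuine dynamics near the essential singularity (the blow-up property), not on topology alone. A secondary technical point is the continuous extension of the Riemann map of an unbounded \sconn\ Fatou component, where one must control the way $\partial U$ interlocks with the other Fatou components inside $\widehat\C\setminus U$.
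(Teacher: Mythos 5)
Your overall architecture --- reduce to ``no \mconn\ components'' plus ``no unbounded components'', then build loops by absorbing into a large circle every Fatou component that meets it and filling in the holes --- matches the paper's (Theorem \ref{no unbounded}). But there are two genuine gaps, and both stem from never invoking Whyburn's characterisation of \lconn\ plane continua (Lemma \ref{whyburn}). That lemma is the engine of the actual proof: local connectedness of $J(f)$ yields that \emph{for each $\eps>0$ only finitely many Fatou components have spherical diameter exceeding $\eps$}, and essentially everything else follows from this one quantitative consequence.

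First, your construction of $G_k$ is broken as written. You claim that because every Fatou component is bounded, the set obtained from $\{|z|<R_k\}$ by adjoining all Fatou components meeting the circle is bounded. That does not follow: infinitely many components may meet the circle, each bounded but with diameters tending to infinity, in which case the union is unbounded and $G_k$ is not a bounded domain. What saves the construction is exactly Whyburn's condition (b): only finitely many of the components meeting $C(0,R_k)$ have spherical diameter above any given threshold, so the union of their closures is bounded. Without this, your loops need not exist even after (A) and (B) are established.

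Second, your route to (B) does not close. The final step --- that a tame channel to infinity with \lconn\ boundary forces a Cantor-bouquet-type subset of $J(f)$ at whose endpoints local connectedness fails --- is asserted, not proved, and it cannot be proved from the local structure of $\partial U$ near $\infty$ alone: by Baker and Weinreich \cite{BW91} an unbounded invariant Baker domain can have a Jordan arc as its boundary, so no contradiction is available there. The paper's contradiction is global and far more elementary: an unbounded component $V$ that is not completely invariant (the completely invariant case is excluded by Lemma \ref{bakerdom2}) has infinitely many distinct preimage components, all unbounded, and a blowing-up argument shows that infinitely many of them meet a fixed disc $B(0,R)$ with $B(0,R)\cap J(f)\neq\emptyset$; each such component then has spherical diameter bounded below by the spherical distance from $C(0,R)$ to $\infty$, contradicting Whyburn's finiteness condition. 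No Riemann map, Carath\'eodory extension, or analysis near the essential singularity is needed --- and your proposed continuous extension of the Riemann map is itself an unproved nontrivial step, as local connectedness of $J(f)$ in $\C$ does not obviously give local connectedness of $\widehat\C\setminus U$ at $\infty$, a difficulty you acknowledge but do not resolve. Finally, for the connectedness of $J(f)$ one should simply quote Baker and Dom\'inguez (Lemma \ref{bakerdom}): local connectedness at a single point already forces $J(f)$ to be connected; your sketch for (A) needs a result of this strength in any case, since local connectedness of a set does not by itself preclude its being disconnected.
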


In fact, we can show that $ J(f) $ is a \sw\ under weaker hypotheses than the local connectedness of $ J(f) $.  Details are given in Section \ref{Thm lc}.

An immediate corollary of Theorem \ref{lcsw} is the following. Here a \textit{Fatou component} is a component of $ F(f) $. 
\begin{corollary}
\label{unbounded}
Let $ f $ be a \tef\ with an unbounded Fatou component.  Then $ J(f) $ is not \lconn.
\end{corollary}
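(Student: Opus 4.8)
The plan is to derive Corollary~\ref{unbounded} directly from Theorem~\ref{lcsw} by a proof by contradiction. So suppose $f$ is a \tef\ with an unbounded Fatou component $U$, and suppose for contradiction that $J(f)$ is \lconn. By Theorem~\ref{lcsw}, $J(f)$ is then a \sw, so there is a sequence $(G_k)_{k\in\N}$ of bounded, \sconn\ domains with $G_k\subset G_{k+1}$, $\partial G_k\subset J(f)$ for each $k$, and $\bigcup_{k\in\N}G_k=\C$.

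The key step is to observe that each loop $\partial G_k$ separates the plane, in the sense that $\C\setminus\partial G_k$ has the bounded component $G_k$ and at least one unbounded component (the complement of $\overline{G_k}$). Now pick any point $z_0\in U$. Since $\bigcup_k G_k=\C$, there is some $k$ with $z_0\in G_k$. Since $U$ is unbounded and $G_k$ is bounded, $U\not\subset G_k$, so $U$ must meet $\C\setminus\overline{G_k}$ as well. Thus $U$ meets both the bounded complementary component of $\partial G_k$ and an unbounded complementary component; since $U$ is connected, $U$ must intersect $\partial G_k$. But $\partial G_k\subset J(f)$, so this contradicts $U\subset F(f)$ and $F(f)\cap J(f)=\emptyset$. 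Hence $J(f)$ cannot be \lconn.

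I do not expect any serious obstacle here: the only mild point to be careful about is the separation claim, namely that a point of the unbounded Fatou component cannot be connected within $F(f)$ to a point of $G_k$ without crossing $\partial G_k$. This is immediate from the fact that $G_k$ is open with boundary contained in $J(f)$, so $F(f)\cap G_k$ is both open and closed in $F(f)$; since $U$ is connected and meets $G_k$, if $U$ were disjoint from $J(f)$ we would need $U\subset G_k$, contradicting that $U$ is unbounded while $G_k$ is bounded. One could alternatively phrase the whole argument by noting that a \tef\ whose Julia set is a \sw\ has all Fatou components bounded, which is essentially the content of the computation above.
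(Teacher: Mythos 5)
Your proof is correct and takes the same route the paper intends: the paper presents the corollary as an immediate consequence of Theorem~\ref{lcsw}, and the separation argument you supply (a connected Fatou component, being disjoint from every loop $\partial G_k\subset J(f)$, must lie entirely inside some bounded $G_k$) is precisely the implicit step. No gaps.
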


In \cite[Theorem E]{BD1}, Baker and Dom\'{i}nguez showed that, if a \tef\ $ f $ has an unbounded invariant Fatou component $ U $, then $ J(f) $ is not locally connected at any point, except perhaps in the case when $ U $ is a Baker domain and $ f \vert_U $ is univalent.  In this exceptional case it is possible for the boundary of $ U $ to be a Jordan arc \cite{BW91}, but Corollary \ref{unbounded} shows that, even then, $ J(f) $ cannot be \lconn\ at all of its points.

In the opposite direction to Theorem \ref{lcsw}, we prove the following result.  We say that a set $ S \subset \C $ \textit{surrounds} a point $ z $ if $ z $ lies in a bounded complementary component of $ S $.

\begin{theorem}
\label{rings}
Let $ f $ be a \tef\ such that $ J(f) $ is a \sw.  Then there exists a subset of $ J(f) $ which is dense in $ J(f) $ and consists of points $ z $ with the property that every \nhd\ of $ z $ contains a continuum in $ J(f) $ that surrounds ~$ z $.  Each such point $ z $ is a buried point of $ J(f) $ at which $ J(f) $ is \lconn.     
\end{theorem}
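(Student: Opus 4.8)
The plan is to prove the three assertions of the theorem in turn: (a) the set $S$ of those $z\in J(f)$ every \nhd\ of which contains a continuum in $J(f)$ surrounding $z$ is dense in $J(f)$; (b) each point of $S$ is buried; (c) $J(f)$ is \lconn\ at each point of $S$. Assertions (b) and (c) are formal consequences of the definition of $S$ together with the connectedness of $J(f)$ (which holds since $J(f)$ is a \sw), so I would treat them first. For (b), let $z\in S$, pick for each $k$ a continuum $C_k\subseteq J(f)$ surrounding $z$ with $C_k\subseteq B(z,1/k)$, and let $B_k$ be the bounded complementary component of $C_k$ containing $z$. If $z$ lay on $\partial U$ for a Fatou component $U$, then $U$ would meet the open \nhd\ $B_k$ of $z$; being connected and disjoint from $J(f)\supseteq C_k$, it would lie in $B_k$, hence in the convex hull of $C_k\subseteq B(z,1/k)$, so that $U\subseteq\bigcap_k B(z,1/k)=\{z\}$, which is impossible. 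Hence $z$ is buried.

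For (c), let $z\in S$ and $\eps>0$, pick a continuum $C\subseteq J(f)$ surrounding $z$ with $C\subseteq B(z,\eps)$, and let $B$ be the bounded complementary component of $C$ containing $z$; then $z\in B$, $\partial B\subseteq C\subseteq J(f)$, and $\operatorname{diam}B\le\operatorname{diam}C\le2\eps$. Since $C$ is a continuum, $\wh\C\setminus B$ is connected — it is the union of $C$ with the other complementary components of $C$, each of which abuts $C$ — so $B$ is a \sconn\ subdomain of $\wh\C$, and hence $\partial B$ is connected. I claim $K:=J(f)\cap\overline B$ is connected: otherwise $K=P\cup Q$ with $P,Q$ disjoint, nonempty and closed, and the connected set $\partial B\subseteq K$ lies in one of them, say $P$, so $Q\subseteq B$; but then $Q$ is closed in $\C$ and open in $J(f)$ (each point of $Q$ has a \nhd\ inside the open set $B$ meeting $J(f)$ only in $Q$), a nonempty proper clopen subset of $J(f)$, contradicting connectedness. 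Since $z$ lies in the open set $B$, the set $K$ is a connected \nhd\ of $z$ in $J(f)$ of diameter at most $2\eps$; letting $\eps\to0$ shows that $z$ has arbitrarily small connected neighbourhoods in $J(f)$, so $J(f)$ is \lconn\ at $z$.

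For (a) it suffices to establish the local assertion $(\star)$: for every $z_0\in J(f)$ and every $\eps>0$ there are a point $z\in J(f)\cap B(z_0,\eps)$ and a continuum $C\subseteq J(f)\cap B(z_0,\eps)$ surrounding $z$. Indeed, granting $(\star)$ and starting inside an arbitrary disc $D$ meeting $J(f)$, repeated application of $(\star)$ produces points $z_j\in J(f)$ and continua $C_j\subseteq J(f)$ with $C_j$ surrounding $z_j$, so chosen that $z_{j+1}$ and $C_{j+1}$ lie in the bounded complementary component $B_j$ of $C_j$ containing $z_j$ and within $2^{-j}$ of $z_j$; then $z:=\lim z_j\in J(f)\cap D$, $z\in B_j$ for every $j$ (so each $C_j$ surrounds $z$), and $\operatorname{diam}C_j\to0$, whence $z\in S\cap D$ and $S$ is dense. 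To prove $(\star)$, I would use the \sw\ structure together with the blowing-up property of the Julia set. After passing to a subsequence one may assume $\overline{G_k}\subseteq G_{k+1}$ for all $k$. Fix a bounded connected region $R$ with $\partial R\subseteq J(f)$, $R\cap J(f)\ne\emptyset$, and $\overline R$ disjoint from the exceptional value of $f$ (if any) — one may take $R=G_k$ for suitable $k$ unless that value forces one to use instead a component of the region between two consecutive loops of the web. For a small disc $\Delta\subseteq B(z_0,\eps)$ centred near $z_0$ and meeting $J(f)$, the blowing-up property supplies $n$ with $f^n(\Delta)\supseteq\overline R$, and one extracts a component $W$ of $f^{-n}(R)$ with $\overline W\subseteq\Delta$. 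Then $W$ is bounded, so $f^n|_W\colon W\to R$ is a proper holomorphic map; its image is therefore open and closed in $R$, and as $R$ is connected, $f^n(W)=R$. Since $J(f)$ is completely invariant and $R$ meets $J(f)$, this yields a point $z\in W\cap J(f)$; moreover $\partial W\subseteq f^{-n}(\partial R)\subseteq J(f)$, again by complete invariance. Then $C:=\partial W$ is a continuum in $J(f)\cap\Delta\subseteq J(f)\cap B(z_0,\eps)$ surrounding $z$ (because $z$ lies in the bounded open set $W$, which is disjoint from $\partial W$), which proves $(\star)$.

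The step I expect to be the main obstacle is the extraction of a component $W$ of $f^{-n}(R)$ with $\overline W\subseteq\Delta$. For a \tef\ the preimage of a bounded domain may have unbounded components, and a priori a component meeting the small disc $\Delta$ need not be confined to it; it is precisely here that the hypothesis that $J(f)$ is a \sw\ is used in an essential way — the nested loops of the web cage the relevant preimages — and carrying this out requires choosing $R$, $\Delta$ and $n$ compatibly. The treatment of the exceptional value of $f$ is a further technical point: if it lies inside every $G_k$ one cannot take $R=G_k$, since then $f^n$ would omit a value of $R$ and every component of $f^{-n}(R)$ would be unbounded, so one must instead use a region of the web that avoids the exceptional value, for instance a component of the annular region between two consecutive loops, whose boundary still lies in $J(f)$. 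Once the confinement of $W$ is established, the remaining arguments above are routine.
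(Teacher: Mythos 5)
Your parts (b) and (c) — that each point of $S$ is buried and that $J(f)$ is \lconn\ there — are correct and essentially coincide with the paper's argument (the paper uses the connected set $\widetilde C\cap J(f)$ exactly as you use $J(f)\cap\overline B$). The iteration scheme deducing density of $S$ from your local assertion $(\star)$ is also sound. The problem is that $(\star)$ itself is not established: the step you flag as the main obstacle — extracting a component $W$ of $f^{-n}(R)$ with $\overline W\subseteq\Delta$ — is a genuine gap, and the heuristic you offer for it ("the nested loops of the web cage the relevant preimages") does not work. The loops $\partial G_k$ lie in $J(f)$, but a component $W$ of $f^{-n}(R)$ necessarily meets $J(f)$ (that is the whole point: $R$ meets $J(f)$ and $J(f)$ is completely invariant), so $W$ is free to cross every loop of the web. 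The blowing-up property only guarantees that each point of $\overline R$ has \emph{some} preimage in $\Delta$; all of those preimages may lie in a single enormous, even unbounded, component of $f^{-n}(R)$, and nothing confines any component to $\Delta$. Without that confinement you get neither properness of $f^n|_W$ nor a continuum $\partial W\subseteq J(f)\cap B(z_0,\eps)$, so $(\star)$ is unproved. (Choosing $R$ between two loops to avoid $E(f)$ does not touch this difficulty, which lives on the preimage side, not the target side.)

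The paper circumvents exactly this obstruction with two devices you have omitted. First, it never claims a preimage component is small or even bounded: it takes a component $W_1$ of $f^{-1}(G)$ (whose boundary lies in $J(f)$ because $\partial G$ does) and intersects it with a bounded annular region $G_{k+2}\setminus\overline G_k$ between web loops; a component $V_1$ of this intersection is automatically bounded with $\partial V_1\subseteq J(f)$, though it is neither small nor located near a prescribed point. Second, having produced three such regions $V_1,V_2,V_3$ with disjoint closures, it invokes the Ahlfors islands corollary (Proposition 4.1) to obtain $n$ and $U\subseteq V_\mu$ with $f^n\colon U\to D_\mu$ conformal; the inverse branch $\phi$ then has an attracting fixed point $z_0$ (a repelling periodic point of $f$, hence in $J(f)$), and the shrinking sets $\phi^k(\overline{V_\mu})$ supply arbitrarily small continua $\phi^k(\partial V_\mu)\subseteq J(f)$ surrounding $z_0$. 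Smallness comes from the contraction of $\phi$, and density comes from the backward orbit of $z_0$, not from a pointwise statement like $(\star)$. Some substitute for the Ahlfors islands theorem (or another source of univalent, contracting inverse branches) is indispensable here; your proposal currently has no such ingredient.
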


Recall that $ z \in J(f) $ is a \textit{buried point} if $ z $ does not lie on the boundary of any Fatou component. The set of all buried points is called the \textit{residual Julia set}, denoted by $ J_r(f). $  It follows from Theorem \ref{rings} that $ J_r(f) $ is never empty for a \tef\ $ f $ for which $ J(f) $ is a \sw.  

Using Theorem \ref{rings} and a topological result due to Whyburn (see Lemma \ref{whyburn2}), we can build on Theorem \ref{lcsw} to obtain more detailed properties of \lconn\ Julia sets for \tef s.

\begin{theorem}
\label{burjor}
Let $ f $ be a \tef\ such that $ J(f) $ is \lconn.  Then 
\begin{enumerate}[(a)]
\item $ J_r(f) \neq \emptyset $, and every \nhd\ of a point $ z \in J_r(f) $ contains a Jordan curve in $ J(f) $ surrounding $ z $;
\item $ J(f) $ is a \sw, and there exists a sequence $ (G_k)_{k \in \N} $ of bounded, \sconn\ domains satisfying (\ref{web}) with $ E = J(f) $, such that the loops $ (\partial G_k)_{k \in \N} $ are Jordan curves.
\end{enumerate} 
\end{theorem}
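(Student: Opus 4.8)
The plan is to derive both parts by combining the two main theorems with the topological input from Whyburn's lemma. Part~(b) is the heart of the matter, so I would attack it first. By Theorem~\ref{lcsw}, $J(f)$ is already a \sw, so there is \emph{some} sequence $(H_k)_{k\in\N}$ of bounded \sconn\ domains with $H_k\subset H_{k+1}$, $\partial H_k\subset J(f)$, and $\bigcup_k H_k=\C$. The problem is that the loops $\partial H_k$ need not be Jordan curves. To upgrade them, I would use the hypothesis that $J(f)$ is \lconn: a compact, connected, \lconn\ set is arcwise connected and, more to the point, one can apply Whyburn's result (Lemma~\ref{whyburn2}) to extract genuine Jordan curves lying inside $J(f)$. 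Concretely, fix $k$ and a point $z\in J_r(f)$ with $z\in H_k$ (such a point exists by Theorem~\ref{rings}, whose hypothesis is met since $J(f)$ is a \sw). Theorem~\ref{rings} supplies, for any prescribed \nhd\ of any buried point, a continuum in $J(f)$ surrounding that point; feeding such a continuum together with the ambient configuration into Whyburn's lemma should yield a Jordan curve $\gamma_k\subset J(f)$ that still surrounds $z$ and, by choosing the \nhd\ large enough (or by an exhaustion/nesting argument), contains $\partial H_k$ — hence $H_k$ — in its bounded face. Writing $G_k$ for the bounded complementary component of $\gamma_k$ (a bounded \sconn\ domain with $\partial G_k=\gamma_k\subset J(f)$), one arranges inductively that $G_k\subset G_{k+1}$ and $H_k\subset G_k$, so that $\bigcup_k G_k\supset\bigcup_k H_k=\C$. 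This gives the refined \sw\ structure with Jordan-curve loops asserted in~(b), and in particular re-proves that $J(f)$ is a \sw.

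For part~(a), the non-emptiness $J_r(f)\neq\emptyset$ is immediate: $J(f)$ \lconn\ $\Rightarrow$ $J(f)$ a \sw\ by Theorem~\ref{lcsw}, and then Theorem~\ref{rings} produces a dense set of buried points. The remaining claim — that every \nhd\ of a point $z\in J_r(f)$ contains a \emph{Jordan curve} in $J(f)$ surrounding $z$ — is again a matter of upgrading the continuum furnished by Theorem~\ref{rings} to a Jordan curve. Here the relevant continuum can be taken inside an arbitrarily small \nhd\ of $z$, so I would apply Whyburn's lemma locally: given the surrounding continuum $K\subset J(f)$ inside a small disc $D$ around $z$, Whyburn's theorem on \lconn\ continua produces a simple closed curve contained in $K$ (or in $K$ together with a controlled piece of $J(f)$) that still separates $z$ from $\partial D$, hence surrounds $z$ and lies in the prescribed \nhd. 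The same mechanism is of course exactly what is used in part~(b), so in practice I would prove one clean lemma — ``in a \lconn\ Julia set, a continuum surrounding $z$ contains, or can be thickened within $J(f)$ to, a Jordan curve surrounding $z$'' — and invoke it twice.

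The main obstacle is the precise extraction of Jordan curves from the surrounding continua, i.e.\ getting Whyburn's lemma to do exactly what is needed. Whyburn's classical result gives simple closed curves \emph{inside} a \lconn\ continuum under separation hypotheses, but one must check that the curve obtained genuinely surrounds $z$ (separates $z$ from $\infty$, resp.\ from $\partial D$) and, for~(b), that it can be made to enclose a prescribed loop $\partial H_k$. Handling the nesting $G_k\subset G_{k+1}$ simultaneously with $H_k\subset G_k$ requires a careful induction: at stage $k+1$ one needs a Jordan curve in $J(f)$ that surrounds both $\overline{G_k}$ and $H_{k+1}$, which means applying the extraction lemma to a continuum surrounding a buried point lying outside $\overline{G_k}\cup H_{k+1}$ — so one also needs buried points arbitrarily far out, again guaranteed by density in Theorem~\ref{rings}. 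Verifying that these curves lie in $J(f)$ (not merely in $\overline{J(f)}=J(f)$, which is automatic, but that the \emph{bounded face} meets $J(f)$ in its boundary correctly) and that $\bigcup_k G_k$ really exhausts $\C$ are the routine but essential loose ends.
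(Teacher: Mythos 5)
Your part (b) has a genuine gap. The strategy is to upgrade the loops $\partial H_k$ of an existing spider's web by applying Theorem~\ref{rings} or Lemma~\ref{whyburn2} at a buried point and ``choosing the neighbourhood large enough'' so that the resulting Jordan curve encloses $\partial H_k$. This cannot work as stated: both results produce surrounding continua, respectively Jordan curves, of \emph{arbitrarily small} diameter about the chosen point, and enlarging the ambient neighbourhood imposes no lower bound on the size of the curve --- a tiny loop around $z$ is still ``a continuum in the prescribed neighbourhood surrounding $z$''. Likewise, picking a buried point lying outside $\overline{G_k}\cup H_{k+1}$ only yields a small loop around that far-away point, not a large loop enclosing $\overline{G_k}\cup H_{k+1}$; you are conflating ``surrounds a point far from the origin'' with ``encloses a large disc about the origin''. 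There is no purely topological mechanism in your sketch that produces Jordan curves in $J(f)$ enclosing arbitrarily large discs. The missing idea is dynamical: take a single small Jordan curve $C\subset J(f)$ surrounding a buried point (available from part (a)), set $G=\mathrm{int}(\widetilde{C})$, and let $\gamma_n$ be the outer boundary component of $f^n(G)$. Then $\gamma_n\subset f^n(C)\subset J(f)$ is a Jordan curve, and the blowing up property of $J(f)$ forces $\mathrm{dist}(\gamma_n,0)\to\infty$, so the Jordan domains $\mathrm{int}(\widetilde{\gamma_n})$ exhaust $\C$ along a suitable subsequence. No pre-existing spider's web needs to be ``upgraded'' at all; the sequence of Jordan loops is built directly, and this re-proves that $J(f)$ is a spider's web.

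Part (a) is essentially right but the mechanism you describe is off. You propose to feed the continuum $K$ supplied by Theorem~\ref{rings} into Whyburn's lemma; but $K$ need not be locally connected (subcontinua of locally connected continua are not in general locally connected), and in any case $z\notin K$, so Lemma~\ref{whyburn2} does not apply to the pair $(K,z)$. The correct and simpler move is to apply Lemma~\ref{whyburn2} with $E=J(f)$ (viewed as a continuum in $\widehat{\C}$) and $p=z$: a buried point lies on the boundary of no complementary component of $J(f)$, so the lemma directly yields Jordan curves in $J(f)$ of arbitrarily small spherical diameter surrounding $z$. Theorem~\ref{rings} is then needed only to guarantee that $J_r(f)\neq\emptyset$. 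With that correction part (a) matches the paper's argument.
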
 

The organisation of this paper is as follows.  In Section \ref{prelim}, we gather together for convenience a number of results used later in the paper, and establish some notation and definitions.  In Section \ref{Thm lc}, we prove Theorem \ref{lcsw} and related results, whilst in Section \ref{Thm rings} we prove Theorems \ref{rings} and \ref{burjor}.   In Section \ref{residual}, we give some results on the residual Julia set $ J_r(f) $, including the fact that there are classes of functions for which $ J_r(f) $ is itself a \sw.    Finally, in Section \ref{examples}, we give a number of examples to illustrate the results of previous sections.  We show that the Julia set of the function $ \sin z $ is a \sw, and give some new examples of \tef s for which the Julia set is \lconn.

\section{Preliminaries} 
\label{prelim}
\setcounter{equation}{0} 

In this section, we gather together a number of results that will be used later in the paper.  We also establish some notation and define certain terms.

We denote by $ B(a, r) $ the open disc $ \lbrace z : \vert z - a \vert < r \rbrace $, by $ \overline{B}(a, r) $ the corresponding closed disc and by $ C(a,r) $ the circle $ \lbrace z : \vert z - a \vert = r \rbrace $.

If $ S $ is a subset of $ \C $, we use the notation $ \widetilde{S} $ to denote the union of $ S $ and all its bounded complementary components (if any).  

A Hausdorff space $ X $ is \textit{\lconn} at the point $ x \in X $ if $ x $ has arbitrarily small connected (but not necessarily open) \nhd s in $ X $.  If this is true for every $ x \in X $, then we say that $ X $ is \lconn\ (see, for example, Milnor \cite[p. 182]{Mil}).

We will need the following topological results due to Whyburn.  Here a \textit{plane continuum} is a compact, connected set lying in the plane or on the Riemann sphere.

\begin{lemma} \cite[Ch.VI, (4.4)]{Why}
\label{whyburn} 
A plane continuum is locally connected if and only if
\begin{enumerate}[(a)]
\item the boundary of each of its complementary components is locally connected, and 
\item for each $ \varepsilon > 0 $, at most finitely many of these complementary components have spherical diameters greater than $ \varepsilon $. 
\end{enumerate} 
\end{lemma}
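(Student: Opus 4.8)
The plan is to prove the two implications separately; the full statement is classical (due to Whyburn, with the necessity of (a) being Torhorst's theorem), so this amounts to recalling the standard arguments.

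\emph{Necessity.} Suppose $X$ is \lconn. Being a compact, connected, \lconn\ metric space, $X$ is a Peano continuum, and in particular is \emph{uniformly} \lconn: for each $\eps>0$ there is $\delta>0$ such that any two points of $X$ within spherical distance $\delta$ lie in a connected subset of $X$ of diameter $<\eps$. For (b), I would argue as follows. A complementary component $U$ of $X$ is open and connected, and a short topological argument --- using that the complement of a small spherical disc is connected --- shows that if $\operatorname{diam}\partial U<\tfrac12\operatorname{diam}U$ then $U$ contains the complement of a spherical disc of radius $\operatorname{diam}\partial U$; since distinct complementary components are disjoint, this can hold for at most one of them. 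Hence, if infinitely many complementary components $U_n$ had spherical diameter $\ge\eps$, all but one would satisfy $\operatorname{diam}\partial U_n\ge\eps/2$; choosing $z_n\in U_n$ and passing to a convergent subsequence $z_n\to p$, one finds $p\in X$ (no $U_m$ can be a neighbourhood of $p$, since it meets no other $U_n$) and every neighbourhood of $p$ meets infinitely many of the pairwise disjoint $U_n$ --- an accumulation of full-sized complementary components at $p$ that is incompatible with the uniform local connectedness of $X$ at $p$. For (a): each complementary component $U$ of $X$ is \sconn, being a component of the complement of a continuum in $\widehat{\C}$; using (b) one checks that $\widehat{\C}\setminus U$ is \lconn, so Carath\'eodory's theorem shows that the Riemann map $\varphi\colon\D\to U$ extends continuously to $\overline{\D}$, whence $\partial U=\varphi(\partial\D)$ is a continuous image of a circle, hence \lconn.

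\emph{Sufficiency.} Now assume (a) and (b); I must show $X$ is \lconn\ at an arbitrary $p\in X$. This is immediate if $p$ is interior to $X$, so suppose $p\in\partial X$, and fix $\eps>0$. By (b) there are only finitely many complementary components of diameter $\ge\eps/4$, so I can choose $\rho>0$ such that $B(p,\rho)$ meets no complementary component of diameter $\ge\eps/4$ except the finitely many having $p$ on their boundary; by (a) each of the latter is \lconn\ at $p$, so contributes only a small connected set near $p$. It then remains to deal with the complementary components of diameter $<\eps/4$ meeting $B(p,\rho/2)$: the aim is to assemble, from $\overline{B}(p,\rho/2)\cap X$ together with the small connected sets just mentioned, a connected subset of $X$ of diameter $<\eps$ that contains a neighbourhood of $p$ in $X$. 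Making this precise is the technical core: I would argue by contradiction, showing that a failure of local connectedness of $X$ at $p$ would produce a non-degenerate \emph{continuum of convergence} through $p$, i.e.\ subcontinua $C_n\subset X$ with $C_n\to C\ni p$, $C_n\cap C=\emptyset$ and $\operatorname{diam}C$ bounded below, and that such a configuration would either generate infinitely many complementary components of diameter bounded below (contradicting (b)) or force $\partial U_i$ to fail local connectedness at $p$ for a single complementary component $U_i$ (contradicting (a)).

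\emph{Main obstacle.} The delicate point --- underlying both the contradiction used for (b) and the sufficiency argument --- is the classification, in terms of the accumulation behaviour of the complementary components, of the ways a plane continuum can fail to be \lconn\ (the continuum-of-convergence dichotomy just described). This is exactly where one must invoke Whyburn's structure theory for plane continua, and it is where I expect the real work to lie.
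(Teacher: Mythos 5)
The first thing to say is that the paper does not prove this lemma: it is quoted directly from Whyburn \cite[Ch.~VI, (4.4)]{Why} as a known classical result, so there is no argument in the paper to measure yours against and your attempt has to stand on its own. As written it is an outline of the correct classical strategy, with the decisive steps asserted rather than established. In the necessity direction --- which is the direction the paper actually relies on, since Theorem~\ref{lcsw} is deduced from Theorem~\ref{no unbounded} by using local connectedness to obtain condition (b) --- the final step of your argument for (b), namely that ``an accumulation of full-sized complementary components at $p$ is incompatible with the uniform local connectedness of $X$ at $p$'', is precisely what requires proof: the points $w_n\in\partial U_n$ accumulating at $p$ do not by themselves contradict uniform local connectedness, since any two of them may perfectly well lie in a small connected subset of $X$. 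One actually has to extract a limit continuum, for instance by observing that each $\partial U_n$ is a subcontinuum of $X$ of diameter at least $\eps/2$, passing to a Hausdorff limit $M\subset X$ of diameter at least $\eps/2$, and invoking the theorem that a Peano continuum contains no nondegenerate continuum of convergence; none of this appears in your text. Likewise, for (a) the phrase ``using (b) one checks that $\widehat{\C}\setminus U$ is \lconn'' conceals essentially the whole content of the Torhorst theorem.

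The second gap is in the sufficiency direction, where you describe a plausible decomposition of a neighbourhood of $p$ and then state explicitly that making it precise is ``the technical core'' and that the continuum-of-convergence dichotomy is ``exactly where one must invoke Whyburn's structure theory''. That dichotomy \emph{is} the theorem: deferring it to the structure theory you are attempting to reprove means the implication from (a) and (b) to local connectedness is not proved at all. What is needed is an explicit small connected neighbourhood of $p$ in $X$ --- for instance the component of $p$ in $X\cap\overline{B}(p,\rho)$, augmented by the small complementary components it meets, together with a proof that the resulting set is a neighbourhood of $p$ in $X$; that is where (a) and (b) genuinely enter and where the work lies. (This direction is used in the paper only in the remark following Example~\ref{three}, but it is still part of the stated lemma.) Given that the paper simply cites Whyburn, the defensible options are to do likewise or to carry out both constructions in full; the present sketch does neither.
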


\begin{lemma} \cite[Ch.VI, (4.5)]{Why} 
\label{whyburn2} 
If a point $ p $ in a \lconn\ plane continuum $ E $ is not on the boundary of any complementary component of $ E $, then for each $ \varepsilon > 0 $, $ E $~contains a Jordan curve of spherical diameter less than $ \varepsilon $ surrounding $ p $.
\end{lemma}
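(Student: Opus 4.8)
The plan is to construct, for each $ \varepsilon > 0 $, a small Jordan curve in $ E $ surrounding $ p $ by starting from a round circle centred at $ p $ and pushing the pieces of it that stray out of $ E $ back onto the boundaries of the complementary components they enter. After a preliminary normalisation I may assume $ p \neq \infty $ and work with the Euclidean metric near $ p $, choosing all radii so small that the resulting objects have spherical diameter less than $ \varepsilon $.

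Two reductions, each using one hypothesis, come first. Write $ U_\infty $ for the complementary component of $ E $ containing $ \infty $. Since $ p \in E $ and $ p \notin \partial U_\infty $, we have $ p \notin \overline{U_\infty} $, so $ p $ lies in the interior of $ \widetilde{E} = \widehat{\C} \setminus U_\infty $; hence there is $ r > 0 $ with $ \overline{B}(p,r) \subset \widetilde{E} $ and $ \overline{B}(p,r) \cap \overline{U_\infty} = \emptyset $, and so every point of $ \overline{B}(p,r) $ lies either in $ E $ or in a \emph{bounded} complementary component of $ E $. Secondly, I would use Lemma \ref{whyburn}(b) together with the hypothesis: only finitely many complementary components $ U_1, \dots, U_N $ have spherical diameter exceeding a prescribed $ \delta > 0 $, and since $ p $ lies on the boundary of none of them it has positive distance from each $ \overline{U_j} $. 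Shrinking $ r $, I may therefore assume $ \overline{B}(p,r) $ meets no $ U_j $, so that every complementary component meeting $ \overline{B}(p,r) $ has diameter less than $ \delta $. Taking $ \delta $ small compared with $ r $, and $ r $ small compared with $ \varepsilon $, then guarantees both that each such component has closure inside the ball of radius $ \varepsilon/2 $ about $ p $, and that (having diameter far less than $ 2\,\mathrm{dist}(p,\cdot) $) none of them surrounds $ p $.

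With these choices I would build the curve from the circle $ C = C(p,r) $, which surrounds $ p $ and lies in $ \widetilde{E} $. The set $ C \setminus E $ is open in $ C $, hence a countable union of open arcs, each contained in a single bounded complementary component $ U $ meeting $ \overline{B}(p,r) $, with both endpoints on $ \partial U \subset E $. Because $ \partial U $ is \lconn\ by Lemma \ref{whyburn}(a) and $ U $ is \sconn, these endpoints can be joined by an arc in $ \partial U $; replacing each straying arc of $ C $ by such a boundary arc yields a closed curve $ \gamma \subset E $. Each replacement changes $ C $ only by a loop contained in some $ \overline{U} $ with $ p \notin \widetilde{U} $, so it leaves the winding number about $ p $ unchanged; thus $ \gamma $ still surrounds $ p $, and by the diameter estimates above $ \gamma $ has spherical diameter less than $ \varepsilon $. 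Finally, a closed curve in the plane that surrounds a point contains a Jordan subcurve surrounding that point, and this Jordan curve is the one required.

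The main obstacle is the assembly step: there may be infinitely many straying arcs, so I must check that performing all the reroutings at once produces a genuine continuous closed curve rather than merely a point set. Here I would again exploit Lemma \ref{whyburn}(b): for every $ \eta > 0 $ all but finitely many of the components involved have diameter less than $ \eta $, so the boundary detours can be taken uniformly small, and $ \gamma $ can be realised as a uniform limit of the finitely-many-rerouting approximations, with continuity and the winding-number computation surviving the passage to the limit. A cleaner but less self-contained alternative would be to invoke Whyburn's cyclic element theory directly: the hypotheses force $ p $ to be an interior point of a true cyclic element of the Peano continuum $ E $, inside which arbitrarily small separating Jordan curves about $ p $ exist. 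I expect the diameter control and the limit passage to be the only delicate points, the winding-number bookkeeping and the extraction of a Jordan subcurve being standard.
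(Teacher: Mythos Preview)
The paper does not prove this lemma at all: it is quoted verbatim from Whyburn's \emph{Analytic Topology} \cite[Ch.~VI, (4.5)]{Why} as a preliminary topological fact, and no argument is given or sketched. So there is no ``paper's own proof'' against which to compare your attempt.

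That said, your outline is broadly in the right spirit and close to how such results are established in Whyburn's framework. A few points deserve tightening. First, when you replace a straying arc of $C$ by an arc in $\partial U$, you need $\partial U$ to be arcwise connected; this follows since $\partial U$ is a locally connected continuum (Lemma~\ref{whyburn}(a)) and such continua are arcwise connected, but you should say so. Second, your winding-number argument requires that the loop formed by the removed circular arc and the inserted boundary arc not surround $p$; you arranged $p\notin\widetilde U$, but the inserted arc lies in $\partial U$, not in $U$, so the relevant loop lies in $\overline U$ and a line is needed to pass from $p\notin\widetilde U$ to the winding-number claim. Third, in the limit step you must ensure the boundary detours can be chosen with diameters tending to zero \emph{with the arcs they replace}; Lemma~\ref{whyburn}(b) controls the diameters of the components $U$, and local connectedness of each $\partial U$ then lets you choose short connecting arcs, but uniformity across infinitely many components needs care. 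Your fallback via cyclic element theory is in fact the route Whyburn himself takes, and is the cleaner way to make all of this rigorous.
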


We make use of the following results on the connectedness properties of the Julia set of a \tef, due to Kisaka and to  Baker and Dom\'{i}nguez.  

\begin{lemma} \cite[Theorem 2]{K1}
\label{kisaka} 
If $ f $ is a \tef\ such that all components of $ F(f) $ are bounded and \sconn, then $ J(f) $ is connected. 
\end{lemma}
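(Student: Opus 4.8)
We sketch a proof. If $F(f)=\emptyset$ then $J(f)=\C$ is connected, so assume $F(f)\neq\emptyset$. The plan rests on the classical topological fact that a \emph{closed} subset $E$ of $\wh\C$ is connected if and only if every component of $\wh\C\setminus E$ is \sconn\ (in $\wh\C$). I would apply this to the closed set $E=J(f)\cup\{\infty\}$: its complement $\wh\C\setminus E=F(f)$ has components exactly the Fatou components of $f$, and a bounded domain is \sconn\ in $\C$ precisely when it is \sconn\ in $\wh\C$, so the hypothesis gives at once that $J(f)\cup\{\infty\}$ is connected. Everything after this uses the boundedness of the Fatou components to pass from $\wh\C$ back to $\C$.

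Suppose, for a contradiction, that $J(f)=A\sqcup B$ with $A,B$ non-empty, closed and disjoint. Since $J(f)$ is unbounded (a standard property of \tef s) and since $(A\cup\{\infty\})\sqcup B$ — or its mirror image — would disconnect $J(f)\cup\{\infty\}$ were $B$ (resp.\ $A$) bounded, both $A$ and $B$ are unbounded. Next I would rule out bounded components of $J(f)$: if $C$ were one, then, $C$ being a component of the compact set $J(f)\cap\overline B(0,R)$ for suitable $R$, standard separation theory supplies a finite union $W$ of polygonal Jordan domains with $C\subset W\subset B(0,R)$ and $\partial W\subset F(f)$; the Jordan curve among the components of $\partial W$ that surrounds $C$ lies in a single Fatou component, which is then \mconn, contrary to hypothesis. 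Hence $A$ and $B$ each contain an unbounded component, $A_0$ and $B_0$ say.

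Finally I would argue on a large circle. For all sufficiently large $R$ the circle $\Gamma_R=C(0,R)$ meets both $A_0$ and $B_0$, and for some such $R$ it also meets $F(f)$ — otherwise $\{|z|\ge R_0\}\subset J(f)$ would be a connected subset of $J(f)$ meeting both $A$ and $B$, which is impossible. Fix such an $R$ and write $\Gamma=\Gamma_R$; then $\Gamma\cap F(f)$ is a union of disjoint open sub-arcs, each lying in a single Fatou component and having both endpoints in $\Gamma\cap J(f)\subset A\cup B$. If every such sub-arc had both endpoints in $A$ or both in $B$, one could colour $\Gamma$ by $\pm1$ (using, on each sub-arc, the common colour of its endpoints) to obtain a continuous map $\Gamma\to\{\pm1\}$ — continuity at a point of $\Gamma\cap J(f)$ holds because an endpoint of a short sub-arc near an $A$-point cannot lie in the closed set $B$ — which is impossible on the connected circle $\Gamma$, since $\Gamma$ meets both $A$ and $B$. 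So some sub-arc $I$ has one endpoint $p\in A$ and the other $q\in B$; the Fatou component $U$ containing $I$ then has $p,q\in\partial U$, so $\partial U\subset A\sqcup B$ meets both $A$ and $B$. But $U$ is bounded and \sconn, so $\partial U$ is a connected continuum, contradicting the separation of $A$ and $B$. Therefore $J(f)$ is connected.

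The step I expect to be the main obstacle is exactly this passage from $\wh\C$ to $\C$: a priori the point $\infty$ could glue together two distinct unbounded components of $J(f)$, and ruling that out is precisely where the hypothesis that \emph{all} Fatou components are bounded must be used — through the no-bounded-component fact together with the connectedness argument on $\Gamma$, both of which need some care (for the latter, because infinitely many Fatou arcs may accumulate on $\Gamma\cap J(f)$).
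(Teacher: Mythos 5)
Your argument is essentially correct, but note that the paper does not prove this statement at all: it is quoted verbatim from Kisaka \cite[Theorem 2]{K1}, so there is no in-paper proof to match your approach against. Taken on its own terms, your sketch assembles the right ingredients. The opening reduction to the classical fact that a compact $E\subset\wh\C$ is connected if and only if every complementary component is simply connected is sound (and correctly identifies $\infty$ as the whole difficulty), although it ends up being logically redundant: once you have shown that $J(f)$ has no bounded components, the non-emptiness of $A$ and $B$ already gives unbounded components $A_0$, $B_0$, so the connectedness of $J(f)\cup\{\infty\}$ is never actually invoked in the endgame. The two steps that carry the proof are both valid: (i) a bounded component of $J(f)$ would be a component of the compact set $J(f)\cap\overline B(0,R)$, hence separable from the rest by a polygonal curve in $F(f)$, whose inside would then lie in a single simply connected Fatou component, a contradiction; and (ii) the two-colouring of a large circle $\Gamma$, where your continuity check at points of $\Gamma\cap J(f)$ (the nearest endpoint of the arc through a nearby point must lie within $\delta$ of an $A$-point, hence in $A$) correctly handles the accumulation of infinitely many Fatou arcs. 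The only assertion left implicit that genuinely deserves a citation is that the boundary of a bounded simply connected domain is a \emph{connected} continuum; this follows from the unicoherence of the sphere (write $\wh\C=\overline U\cup(\wh\C\setminus U)$, both closed and connected, so their intersection $\partial U$ is connected) or from a nested-annuli argument via the Riemann map, and it is where the simple connectivity of $U$, not just its boundedness, is used in the final contradiction. With those standard facts granted, the proof is complete.
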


\begin{lemma} \cite[part of Theorem A]{BD1}
\label{bakerdom} 
If $ f $ is a \tef\ such that $ J(f) $ is \lconn\ at one of its points, then $ J(f) $ is connected.
\end{lemma}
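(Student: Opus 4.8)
The plan is to prove the contrapositive: if $ J(f) $ is disconnected then $ J(f) $ is not \lconn\ at any point. By results of Baker and Kisaka — of which Lemma \ref{kisaka} is one half — a \tef\ has disconnected Julia set precisely when $ F(f) $ has a \mconn\ component, and any such component $ U $ is then bounded, is a \wand, and satisfies $ f^n|_U \to \infty $ locally uniformly. (The easy direction: $ U $ being bounded, $ \C \setminus U $ has a bounded complementary component, which necessarily meets $ J(f) $, yielding a nonempty proper relatively clopen subset of $ J(f) $.) So we may assume that $ F(f) $ has such a component $ U $.

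By Baker's quantitative description of \mconn\ \wand s, for all large $ n $ the component $ f^n(U) $ is bounded and \mconn, surrounds $ 0 $, and contains a round annulus $ A_n = \{\, z : \rho_n < |z| < \sigma_n \,\} $ with $ \overline{A_n} \subset F(f) $, $ \rho_n \to \infty $ and $ \sigma_n / \rho_n \to \infty $; each $ A_n $ separates $ 0 $ from $ \infty $. Two consequences. First, every component of $ J(f) $ is bounded: a connected subset of $ J(f) $ avoids every $ A_n $ and, meeting $ \{\, |z| < \rho_n \,\} $ once $ n $ is large, lies there. Second, since $ f^n(U) $ has a bounded complementary component containing $ 0 $ that meets $ J(f) $, and since $ J(f) $ is unbounded, for any $ r_n \in (\rho_n, \sigma_n) $ the disc $ D_n = \{\, |z| < r_n \,\} $ satisfies: $ J(f) \cap D_n $ is nonempty, bounded and proper in $ J(f) $, while $ \partial D_n \subset A_n \subset F(f) $.

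Now fix $ w \in J(f) $, with component $ J_w $ (bounded, by the above). It suffices to show $ w \in \overline{J(f) \setminus J_w} $, since then any sufficiently small connected subset of $ J(f) $ through $ w $ lies in $ J_w $ and so cannot be a \nhd\ of $ w $ in $ J(f) $. Pick a non-exceptional point $ \zeta \in J(f) \cap D_n $ for some fixed $ n $; by the blowing-up property of $ J(f) $ its backward orbit is dense, so there are $ \zeta_j \in f^{-k_j}(\zeta) $ with $ \zeta_j \to w $. Let $ P_j $ be the component of $ f^{-k_j}(D_n) $ containing $ \zeta_j $. Then $ \partial P_j \subset f^{-k_j}(\partial D_n) \subset F(f) $, and $ J(f) \not\subset P_j $ because $ f^{k_j}(P_j) \subset D_n $ is bounded whereas $ f^{k_j}(J(f)) $ is not; hence $ J(f) \cap P_j $ is a nonempty proper relatively clopen subset of $ J(f) $ containing $ \zeta_j $. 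If $ J_w \cap P_j = \emptyset $ for infinitely many $ j $, then for those $ j $ we have $ \zeta_j \in J(f) \setminus J_w $ with $ \zeta_j \to w $, and we are done. Otherwise $ w \in J_w \subset P_j $ for all large $ j $, so $ f^{k_j}(w) \in J(f) \cap D_n $; one then reruns the construction with a larger separating disc $ D_m $ inside which the compact set $ J(f) \cap D_n $ is itself disconnected by a further member of the cascade $ \{\, f^\ell(U) \,\} $, iterating if necessary, to again place points of $ J(f) \setminus J_w $ arbitrarily close to $ w $.

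The main obstacle is precisely this last point — and, more broadly, the claim that the annular separating structure carried by $ F(f) $, namely the images and all preimages of the fundamental annuli of $ U $, is dense near every point of $ J(f) $ and at arbitrarily small scales, so that no point lies in the relative interior of its own (bounded) component. Mere existence of a \mconn\ \wand\ is not enough: controlling the diameters of the pulled-back loops, and dealing with points of $ J(f) $ whose forward orbit recurs within a fixed bounded part of $ J(f) $, requires Baker's nested description of such domains together with the distortion bounds it supplies. Granting this multi-scale density, the conclusion that $ J(f) $ is nowhere \lconn\ is immediate, and the remaining steps are routine.
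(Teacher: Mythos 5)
The paper offers no proof of this statement: it is quoted directly from Baker and Dom\'{i}nguez \cite[Theorem A]{BD1}, so there is no in-paper argument to compare against. Your strategy --- pass to the contrapositive, reduce to the existence of a multiply connected (hence bounded, wandering) Fatou component $U$, and use the nested annuli $A_n \subset f^n(U)$ lying in $F(f)$ to show every component of $J(f)$ is bounded and has empty interior relative to $J(f)$ --- is essentially the route taken in \cite{BD1}, so the plan is the right one.

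There is, however, a genuine gap, and you have flagged it yourself: the ``otherwise'' branch of your dichotomy, the case of a point $w$ whose iterates $f^{k_j}(w)$ keep returning to the fixed disc $D_n$, is not handled. ``Rerun the construction with a larger separating disc, iterating if necessary'' is not an argument: nothing you have established shows that the separating structure (the annuli $A_\ell$ and their preimages) accumulates on such a $w$ at arbitrarily small scales. What is actually needed is Dom\'{i}nguez's theorem \cite{PD1} that, in the presence of a multiply connected Fatou component, the singleton (buried) components of $J(f)$ are dense in $J(f)$; granting that, every $w$ is a limit of points of $J(f)\setminus J_w$ and your concluding step goes through. Proving that density is the real content of the lemma, and it requires either Baker's distortion estimates for the pulled-back annuli or the Ahlfors-islands device --- precisely Proposition \ref{islands}, which this paper uses in the proof of Theorem \ref{rings} to produce a conformal inverse branch with an attracting fixed point whose iterates shrink a separating loop down to a point; here the loop would lie in $F(f)$ rather than $J(f)$. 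A secondary concern: your opening reduction invokes the full equivalence ``$J(f)$ is disconnected if and only if $F(f)$ has a multiply connected component.'' The hard direction of this is itself part of Theorem A of \cite{BD1} (Lemma \ref{kisaka} covers only the case where all Fatou components are bounded and simply connected), so as written the proposal leans on a statement of essentially the same depth as the one being proved.
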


\begin{lemma} \cite[Corollary 3]{BD1}
\label{bakerdom2} 
If $ f $ is a \tef\ and $ F(f) $ has a completely invariant component, then $ J(f) $ is not \lconn\ at any point.  
\end{lemma}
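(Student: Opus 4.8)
The plan is to obtain the lemma from \cite[Theorem E]{BD1}, recalled in the introduction: if a \tef\ $f$ has an unbounded invariant Fatou component $U$, then $J(f)$ is not \lconn\ at any point, with the one possible exception that $U$ is a Baker domain on which $f|_U$ is univalent. So it suffices to show that a completely invariant component $U$ of $F(f)$ is unbounded and is not this exceptional case. Since complete invariance includes $f(U)\subseteq U$, such a $U$ is in particular invariant; and from $f^{-1}(U)\subseteq U$ together with $U\subseteq f^{-1}(f(U))\subseteq f^{-1}(U)$ one gets $f^{-1}(U)=U$, hence $f^{-n}(U)=U$ for all $n$. It then also follows that $U$ is \sconn, since a \mconn\ Fatou component of a \tef\ is a \wand\ and so cannot be invariant, and that $\partial U=J(f)$, using $f^{-n}(U)=U$ together with the blow-up property of $J(f)$.

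First I would show $U$ is unbounded. By Picard's theorem $f$ takes every value in $\C$ infinitely often, with at most one exception, so we may pick $w_0$ in the non-empty open set $U$ for which $f^{-1}(\lbrace w_0\rbrace)$ is infinite; by the above, $f^{-1}(\lbrace w_0\rbrace)\subseteq U$. The zeros of $f-w_0$ have no finite accumulation point, so only finitely many lie in any bounded set, and hence $U$, which contains all of them, is unbounded. Next, $f|_U$ is not univalent: the set $f^{-1}(\lbrace w_0\rbrace)$ is an infinite subset of $U$, so $f|_U$ is not even injective. Consequently \cite[Theorem E]{BD1} applies to $U$ with no exception, and $J(f)$ is not \lconn\ at any point.

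The two verifications above are elementary; the substantive content is entirely inside \cite[Theorem E]{BD1}, and this is where the real difficulty lies. A self-contained proof would have to reproduce the argument of that theorem, which analyses the boundary behaviour of the \sconn\ domain $U$ near $\infty$ --- exploiting that $U$ is unbounded and that $f|_U$ is infinite-to-one, so that points of $U$ have infinitely many preimages accumulating at $\infty$ --- and then propagates the resulting obstruction to every point of $J(f)=\partial U$ via the dynamics (and, under the contradiction hypothesis that $J(f)$ is \lconn\ at some point, $J(f)$ is connected by Lemma~\ref{bakerdom}, so that $J(f)\cup\lbrace\infty\rbrace$ is a plane continuum). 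It is worth stressing that the weaker facts extractable from the introduction --- Corollary~\ref{unbounded}, or Theorem~\ref{burjor} applied contrapositively (note $J_r(f)=\emptyset$ here, since $J(f)=\partial U$) --- give only that $J(f)$ fails to be \lconn\ at \emph{some} point, which is strictly weaker than what is asserted.
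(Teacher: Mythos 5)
This lemma is quoted from \cite[Corollary 3]{BD1} and the paper gives no proof of its own, so the only fair comparison is with the cited source. Your reduction is correct: a completely invariant component is invariant, is unbounded (an infinite Picard fibre $f^{-1}(\lbrace w_0\rbrace)$ lies in $U$ and, being a discrete infinite set, is unbounded), and $f\vert_U$ is not univalent for the same reason, so the exceptional case of \cite[Theorem E]{BD1} cannot occur and that theorem yields the conclusion at every point. This is essentially how the corollary is obtained in \cite{BD1}, and you rightly note that the genuine analytic content lives in Theorem~E rather than in this reduction.
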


We also recall some of the terminology associated with the dynamics of \tef s.

If $ U = U_0 $ is a Fatou component, then for each $ n \in \N $, $ f^n(U) \subset U_n $ for some Fatou component $ U_n $.  If $ U = U_n $ for some $ n \in \N $, we say that $ U $ is \textit{periodic} or \textit{cyclic}, and if $ n = 1 $, that $ U $ is \textit{invariant}. If $ U $ is not eventually periodic, i.e. if $ U_m \neq U_n $ for all $ n > m \geq 0 $, then $ U $ is called a \textit{wandering} Fatou component or a \textit{\wand}.  Periodic Fatou components for \tef s can be classified into four types, namely immediate attracting basins, immediate parabolic basins, Siegel discs and Baker domains.  We refer to \cite{wB93, MNTU} for the definitions and properties of such components. 

Note that, if $ U $ is a Fatou component such that $ f^{-1}(U) \subset U $, then it follows that $ f(U) \subset U $, and $ U $ is referred to as \textit{completely invariant}.  It is shown in \cite{iB70} that, if $ f $ is a \tef, there can be at most one such component.

The \textit{exceptional set} $ E(f) $ is the set of points with a finite backwards orbit under ~$ f $. For a \tef\ $ E(f) $ contains at most one point.  We will need the well-known \textit{blowing up property} of the Julia set $ J(f) $:

\begin{myindentpar}{1cm}
if $ f $ is an entire function, $ K $ is a compact set, $ K \subset \C \setminus E(f) $ and $ V $ is an open \nhd\ of $ z \in J(f) $, there exists $ N \in \N $ such that $ f^n(V) \supset K $, for all $ n \geq N. $
\end{myindentpar}

The dynamical behaviour of a \tef\ is much affected by the properties of its set of \textit{singular values}, that is, the set of all of its critical values and finite asymptotic values.  The set of singular values of $ f $ is denoted by $ \text{sing}(f^{-1})$, and $ f $ is said to be in the Speiser class $ \mathcal{S} $ if $ \text{sing}(f^{-1}) $ is a finite set, and in the Eremenko-Lyubich class $ \mathcal{B} $ if $ \text{sing}(f^{-1}) $ is bounded. 
 
Finally in this section, we give a definition and a result from Rippon and Stallard's paper \cite{RS10a} where the notion of a \sw\ was first introduced.

Let $ I(f) $ denote the set of points whose orbits escape to infinity,
\[ I(f) = \lbrace z \in \C : f^n(z) \to \infty \text{ as } n \to \infty \rbrace.\]
Then we define the subset $ A_R(f) $ of $ I(f) $ as follows.  Let $ R > 0 $ be such that $ M(r, f) > r $ for $ r \geq R $.  Then
\[ A_R(f) = \lbrace z \in \C : \vert f^n(z) \vert \geq M^n(R, f), \text{ for } n \in \N \rbrace, \]  
where $ M(r, f) = \max_{\vert z \vert = r} \vert f(z) \vert \text{ for } r > 0, $ and $ M^n(r,f) $ denotes the $ n $th iterate of $ M $ with respect to $ r $.  

The result we will need from \cite{RS10a} is the following.

\begin{lemma} \cite[Theorem 1.5]{RS10a} 
\label{fast}
Let $ f $ be a \tef, let $ R > 0 $ be such that $ M(r, f) > r $ for $ r \geq R $, and let $ A_R(f) $ be a \sw.  If $ f $ has no \mconn\ Fatou components, then $ J(f) $ is a \sw.
\end{lemma}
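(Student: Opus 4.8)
The plan is to exploit the observation that, under the stated hypotheses, the loops $\partial G_k$ furnished by the \sw\ structure of $A_R(f)$ already lie inside $J(f)$. Consequently $J(f)$ automatically inherits a sequence of nested, bounded, \sconn\ domains whose boundaries lie in $J(f)$ and whose union is $\C$, and the only remaining thing to establish is that $J(f)$ is \emph{connected}. That last point I would obtain from Kisaka's Lemma~\ref{kisaka}, after checking that every Fatou component is bounded and \sconn.

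First I would show that $A_R(f) \subset J(f)$. Since $A_R(f)$ is a \sw, there is a sequence $(G_k)_{k\in\N}$ of bounded, \sconn\ domains with $G_k \subset G_{k+1}$, $\partial G_k \subset A_R(f)$, and $\bigcup_{k\in\N} G_k = \C$. Every point of $A_R(f)$ escapes to infinity (indeed at the maximal rate, since $M(r,f)>r$ for $r\ge R$ forces $M^n(R,f)\to\infty$), and it is a fact from the structure theory of the fast escaping set that any Fatou component meeting $A_R(f)$ must be a \mconn\ \wand; a point of $A_R(f)\cap F(f)$ would lie in such a component. As $f$ has no \mconn\ Fatou components by hypothesis, we conclude $A_R(f)\cap F(f)=\emptyset$, that is, $A_R(f)\subset J(f)$. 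In particular $\partial G_k \subset J(f)$ for every $k\in\N$.

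Next I would prove that every Fatou component is bounded and \sconn. Simple connectivity is immediate from the hypothesis that there are no \mconn\ components. For boundedness, let $U$ be a Fatou component; then $U$ is connected and $U\cap J(f)=\emptyset$, so $U$ misses each loop $\partial G_k$. Writing $\C\setminus\partial G_k = G_k \cup (\C\setminus\overline{G_k})$ as a disjoint union of two open sets, connectedness of $U$ forces $U\subset G_k$ or $U\subset \C\setminus\overline{G_k}$. If $U$ were unbounded it could not lie in the bounded set $G_k$, so we would have $U\subset \C\setminus\overline{G_k}$ for every $k$; but then $U\subset\bigcap_{k\in\N}(\C\setminus\overline{G_k}) = \C\setminus\bigcup_{k\in\N}\overline{G_k} \subset \C\setminus\bigcup_{k\in\N}G_k = \emptyset$, a contradiction. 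Hence $U$ is bounded. (Note this argument uses only that the $G_k$ are bounded and open with $\partial G_k\subset J(f)$.)

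Finally, all components of $F(f)$ being bounded and \sconn, Lemma~\ref{kisaka} shows that $J(f)$ is connected. Combining this with the sequence $(G_k)_{k\in\N}$ of bounded, \sconn\ domains satisfying $G_k\subset G_{k+1}$, $\partial G_k\subset J(f)$, and $\bigcup_{k\in\N}G_k=\C$ shows that $J(f)$ satisfies the definition of a \sw, completing the proof. The main obstacle is the first step: the inclusion $A_R(f)\subset J(f)$ is precisely where the dynamical hypothesis of having no \mconn\ Fatou components does its work, and it rests on the nontrivial characterisation of which Fatou components can meet the fast escaping set $A_R(f)$. Once that inclusion is secured, the remainder is elementary plane topology together with Kisaka's connectedness criterion.
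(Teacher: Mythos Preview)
The paper does not supply its own proof of this lemma; it is quoted verbatim from Rippon and Stallard \cite[Theorem~1.5]{RS10a} as a preliminary result, so there is no in-paper argument to compare against.

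Your outline is the natural one, and Steps~2 and~3 are correct: once you know that the loops $\partial G_k$ from the $A_R(f)$ spider's web lie in $J(f)$, boundedness of every Fatou component follows by the elementary connectedness argument you give, simple connectivity is assumed, and then Lemma~\ref{kisaka} yields that $J(f)$ is connected. The spider's web conclusion then drops out.

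The gap is precisely where you flag it, in Step~1. You assert as ``a fact from the structure theory of the fast escaping set'' that any Fatou component meeting $A_R(f)$ must be a \mconn\ \wand. The implication that holds without further hypotheses runs the other way: every \mconn\ Fatou component lies in $A(f)$ (this is essentially \cite[Theorem~4.4]{RS10a}, and is how the present paper uses that result in the proof of Theorem~\ref{residualsw}). The converse, however, fails in general: \sconn\ fast escaping \wand s exist, so a Fatou component can meet $A(f)$ (and indeed $A_R(f)$) without being \mconn. Thus the inclusion $A_R(f)\subset J(f)$ cannot be obtained simply by invoking a known dichotomy; it requires an argument that genuinely uses the spider's web hypothesis on $A_R(f)$, or else one must construct the $J(f)$-loops differently (for instance by modifying each $\partial G_k$ around any bounded Fatou components it meets, in the spirit of the proof of Theorem~\ref{no unbounded}). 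As written, your Step~1 is an assertion rather than an argument, and it is not one that can be taken for granted.
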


\section{Proof of Theorem \ref{lcsw} and related results} 
\label{Thm lc}
\setcounter{equation}{0} 

We now prove the following result and show that this implies Theorem \ref{lcsw}.

\begin{theorem}
\label{no unbounded}
Let $ f $ be a \tef\ such that:
\begin{enumerate}[(1)]
\item $ F(f) $ has no completely invariant component, and
\item for each $ \varepsilon > 0 $, at most finitely many components of $ F(f) $ have spherical diameters greater than $ \varepsilon $. 
\end{enumerate}  
Then $ F(f) $ has no unbounded components, and there exists a sequence $ (G_k)_{k \in \N} $ of bounded, \sconn\ domains such that
\begin{itemize}
\item $ G_{k+1} \supset G_k $, for $ k \in \N $,
\item $ \partial G_k \subset J(f) $, for $ k \in \N $ and
\item $ \bigcup_{k \in \N} G_k = \C. $
\end{itemize}
\end{theorem}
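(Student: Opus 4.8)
The strategy is to build the nested sequence $(G_k)$ of bounded simply connected domains directly, using hypothesis (2) to control the Fatou components that a large circle can meet, and hypothesis (1) together with Lemma~\ref{bakerdom2} (or rather its contrapositive) to rule out the obstruction that a completely invariant component would pose. First I would fix a large $R$ and consider the circle $C(0,R)$. The key observation is that, by hypothesis (2), for any $\varepsilon>0$ only finitely many Fatou components have spherical diameter exceeding $\varepsilon$; choosing $\varepsilon$ small relative to the spherical size of $C(0,R)$, I can arrange that each Fatou component meeting $C(0,R)$ either is one of those finitely many ``large'' components, or is small enough that it cannot cross $C(0,R)$ — so in fact every Fatou component meeting $C(0,R)$ does so in a ``controllable'' way. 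The plan is then to modify $C(0,R)$ by re-routing it around the (finitely many) Fatou components it meets: replace each arc of $C(0,R)$ lying inside such a component $U$ by an arc in $\partial U \subset J(f)$, using simple connectivity of $U$ (which follows because a multiply connected Fatou component would be bounded and could not contain points of arbitrarily large modulus — but I should be careful and instead note that any Fatou component meeting a sufficiently large circle is eventually iterated into a neighbourhood of infinity, hence is simply connected by Baker's theorem that multiply connected Fatou components are wandering and bounded). This produces a closed curve $\Gamma_R$ contained in $J(f) \cup (\text{finitely many bounded pieces})$, lying entirely in $J(f)$ after the re-routing, and enclosing $B(0,R)$.

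Next I would take $G_k$ to be the bounded complementary component of $\Gamma_{R_k}$ containing the origin, for a sequence $R_k \to \infty$. One needs: (i) each $G_k$ is bounded — clear, since $\Gamma_{R_k}$ is a bounded curve; (ii) $\partial G_k \subset J(f)$ — this is the re-routing construction; (iii) $G_k$ simply connected — take $\widetilde{G_k}$, the filled version, which is automatically simply connected and still has boundary in $J(f)$ since filling in can only remove boundary arcs that bounded complementary components, and those components, being disjoint from the unbounded component, have boundary already in $J(f) \cup \partial G_k \subset J(f)$; (iv) $G_{k+1} \supset G_k$ — arrange $R_{k+1}$ large enough that $\Gamma_{R_{k+1}}$ encloses all of $\widetilde{G_k}$, which is possible since $\widetilde{G_k}$ is a fixed bounded set; (v) $\bigcup_k G_k = \C$ — immediate since $G_k \supset B(0,R_k)$ and $R_k\to\infty$. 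Finally, once such a sequence exists, \emph{no} Fatou component can be unbounded: an unbounded Fatou component $V$ would be connected and disjoint from $J(f) \supset \partial G_k$ for every $k$, hence contained in a single complementary component of each $\partial G_k$; since $V$ is unbounded it must lie in the unbounded complementary component, i.e.\ $V \cap G_k = \emptyset$ for all $k$, contradicting $\bigcup G_k = \C$.

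The main obstacle I anticipate is step (iii)/(iv) — the honest verification that the re-routed curve can be taken to lie in $J(f)$ and that the filled-in domains are genuinely simply connected with boundary still in $J(f)$. In particular, one must handle the possibility that the circle $C(0,R)$ passes through the boundary of a Fatou component in a complicated (non-arc) way, and that after re-routing around one component the curve enters another; since only finitely many ``large'' components are involved, a finite induction should close this, but the bookkeeping (choosing $\varepsilon$, then $R$, then enumerating the relevant components, then re-routing one at a time while checking the curve stays a Jordan-type curve or at least a connected closed set with the filling property) is where the care is needed. A secondary subtlety is justifying that the relevant Fatou components are simply connected: here I would invoke that multiply connected Fatou components of a \tef\ are bounded wandering domains whose iterates surround $0$ and tend to $\infty$ (Baker), so a multiply connected component, being bounded, meets only boundedly many of the circles $C(0,R_k)$ and can simply be avoided by choosing the $R_k$ appropriately — alternatively, absorb any such component into the filled region $\widetilde{G_k}$. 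Hypothesis (1) enters precisely to prevent the degenerate situation where the only way to build the loops would force $\partial G_k$ to coincide with the boundary of a single completely invariant component whose complement has no bounded component enclosing arbitrarily large discs; equivalently it is what lets us apply Lemma~\ref{bakerdom2}'s setting to know $J(f)$ is not ``trivial'' near infinity.
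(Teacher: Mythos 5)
There is a genuine gap: you never prove that $F(f)$ has no unbounded components, and your plan for obtaining this is circular. You propose to build the loops $\partial G_k \subset J(f)$ first and then observe that an unbounded Fatou component would lie in the unbounded complementary component of every $\partial G_k$, contradicting $\bigcup_k G_k = \C$. That deduction is fine, but it cuts both ways: if an unbounded Fatou component $V$ meets $C(0,R)$, then \emph{no} continuum in $J(f)$ can surround $B(0,R)$ at all (such a continuum is disjoint from the connected unbounded set $V$, which must therefore lie entirely in its unbounded complementary component, yet $V$ meets $B(0,R)$). So your re-routing construction cannot even begin until unbounded components are excluded, and the proposal contains no argument excluding them. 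This is exactly where hypothesis (1) must be used, and your closing remark about it (appealing to Lemma~\ref{bakerdom2}) does not supply an argument: that lemma yields information only when a completely invariant component \emph{exists}. The paper's route is: a component that is not completely invariant has infinitely many distinct preimage components (proved via the uniqueness of completely invariant components); if $V$ is unbounded all these preimages are unbounded; by the blowing up property infinitely many of them must meet a fixed disc $B(0,R)$; each then has spherical diameter at least the positive spherical distance from $C(0,R)$ to $\infty$, contradicting hypothesis (2). Without some such argument the theorem is simply not proved (consider that hypothesis (2) holds vacuously when $F(f)$ is a single unbounded completely invariant domain).

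There are also secondary problems in the loop construction itself. Infinitely many Fatou components may meet $C(0,R)$, and a component of small spherical diameter that meets the circle can perfectly well cross it, so your reduction to ``finitely many components to re-route around'' is incorrect; moreover $\partial U$ need not be locally connected, so replacing an arc of the circle by ``an arc in $\partial U$'' is not available in general. All of this is avoided by the paper's simpler device: once unbounded components are excluded, set $D = C(0,r) \cup \bigcup_j \overline{U_j}$, the union over \emph{all} components meeting the circle (this is bounded, since the finitely many of large spherical diameter are bounded sets and the rest stay spherically close to the circle), and take $G = \mathrm{int}(\widetilde{D})$. This is automatically a bounded, simply connected domain with $\partial G \subset J(f)$ — no re-routing, no arcs, and no need for your digression on multiply connected components, which plays no role in this theorem.
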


\begin{corollary}
\label{spider}
Let $ f $ be a \tef\ satisfying the assumptions of Theorem \ref{no unbounded}, and assume further that $ F(f) $ has no \mconn\ components.  Then $ J(f) $ is a \sw.
\end{corollary}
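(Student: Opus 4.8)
The plan is to obtain the result as an immediate combination of Theorem~\ref{no unbounded} with Kisaka's connectedness criterion, Lemma~\ref{kisaka}. Recall that, by definition, for $J(f)$ to be a \sw\ we need two things: that $J(f)$ be connected, and that there exist a sequence $(G_k)_{k\in\N}$ of bounded, \sconn\ domains satisfying~(\ref{web}) with $E = J(f)$. The second requirement is handed to us directly by Theorem~\ref{no unbounded}: its hypotheses~(1) and~(2) are precisely the assumptions we have been given, and its three displayed conclusions about $(G_k)$ are exactly the conditions appearing in~(\ref{web}). So the entire content of the proof reduces to checking that $J(f)$ is connected.

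For that, I would first invoke Theorem~\ref{no unbounded} once more to conclude that $F(f)$ has no unbounded components. By the additional hypothesis of the corollary, $F(f)$ also has no \mconn\ components. Hence every component of $F(f)$ is bounded and \sconn, so Lemma~\ref{kisaka} applies and gives that $J(f)$ is connected. (The degenerate case $F(f)=\emptyset$ causes no trouble: then $J(f)=\C$, which is connected and is a \sw\ via $G_k = B(0,k)$.)

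Putting the two halves together: $J(f)$ is connected and the sequence $(G_k)_{k\in\N}$ furnished by Theorem~\ref{no unbounded} witnesses~(\ref{web}) with $E = J(f)$; therefore $J(f)$ is a \sw.

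There is no serious obstacle here — all of the substantive work lies in Theorem~\ref{no unbounded} — but I would take care over one point: one must be sure that Lemma~\ref{kisaka} needs nothing beyond ``all Fatou components bounded and \sconn'' (in particular, no hypothesis on the singular set, the order of growth, or similar), so that the deduction is valid for an arbitrary \tef\ meeting the stated assumptions. Since that is exactly how Lemma~\ref{kisaka} is stated, the argument goes through.
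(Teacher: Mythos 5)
Your proposal is correct and follows essentially the same route as the paper: the paper's proof likewise observes that only connectedness of $J(f)$ remains, and obtains it from Lemma~\ref{kisaka} using the boundedness of Fatou components supplied by Theorem~\ref{no unbounded} together with the no-multiply-connected-components hypothesis. Your version merely spells out these ingredients (and the trivial case $F(f)=\emptyset$) a little more explicitly.
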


Note that, if $ J(f) $ is \lconn, it follows from Lemmas \ref{whyburn} and \ref{bakerdom2} that the assumptions of Theorem \ref{no unbounded} hold. Theorem \ref{lcsw} then follows because $ J(f) $ is connected by Lemma \ref{bakerdom}.

We will need the following simple lemma, in which by a \textit{preimage} of a Fatou component $ U $, we mean a component of $ f^{-n}(U) $ for some $ n \in \N $.  This result is surely known, but we include a proof for completeness as we have been unable to locate a reference.  
\begin{lemma}
\label{infinite}
Let $ f $ be a \tef. Then every component of $ F(f) $ which is not completely invariant has infinitely many distinct preimages. 
\end{lemma}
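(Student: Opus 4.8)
The plan is to argue by contradiction: suppose $U$ is a Fatou component of $f$ that is not completely invariant yet has only finitely many distinct preimages. I would first collect these into one set. Let $Z_1 = U, Z_2, \dots, Z_r$ be the (finitely many) distinct Fatou components that equal $U$ or are preimages of $U$, and put $\Omega = Z_1 \cup \dots \cup Z_r$. Two routine facts set things up: since a preimage of any $Z_j$ is again a preimage of $U$, every component of $f^{-1}(Z_j)$ is one of $Z_1, \dots, Z_r$, so $f^{-1}(\Omega) \subseteq \Omega$; and since a non-constant entire function omits at most one value, $f^{-1}(Z_j) \ne \emptyset$ for each $j$, and its components are Fatou components, $F(f)$ being completely invariant.

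The heart of the proof is a double count. For $i \geq 2$ one has $f(Z_i) \subseteq \Omega$: if $Z_i$ is a component of $f^{-n}(U)$ with $n \geq 1$, then $f(Z_i)$ lies in a component of $f^{-(n-1)}(U)$, which is $U$ or a preimage of $U$, hence some $Z_j$. Let $I \subseteq \{1, \dots, r\}$ be the set of indices $i$ with $f(Z_i) \subseteq \Omega$, so $\{2, \dots, r\} \subseteq I$, and for $i \in I$ let $\tau(i)$ be the index with $f(Z_i) \subseteq Z_{\tau(i)}$. For each $j$ the components of $f^{-1}(Z_j)$ are exactly the $Z_i$ with $i \in I$ and $\tau(i) = j$; summing over $j$ gives $\sum_{j=1}^r \#\{\text{components of } f^{-1}(Z_j)\} = \#I$. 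Each summand is at least $1$, so $\#I \geq r$; but $\#I \leq r$, with equality only when $1 \in I$, i.e.\ only when $f(U) \subseteq \Omega$, which forces $U$ to be periodic (the Fatou component containing $f(U)$ is then $U$ or a preimage of $U$, so some iterate of $U$ returns to $U$). Hence $U$ is periodic, $I = \{1, \dots, r\}$, each $f^{-1}(Z_j)$ is connected, and $\tau$ is a bijection of $\{1, \dots, r\}$; moreover $\tau$ is a single $r$-cycle, since for each $i$ some iterate $\tau^n(i)$ equals $1$ (because $f^n(Z_i) \subseteq U = Z_1$ for a suitable $n \geq 0$), so every index lies in the cycle of $\tau$ through $1$.

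To finish I would pass to $g = f^{r}$, which is again a \tef\ with the same Fatou components as $f$ (for instance, $J(g) = J(f)$ is unbounded). Since $f^{-1}(Z_j) = Z_{\tau^{-1}(j)}$ and $\tau^{r} = \mathrm{id}$, iterating gives $g^{-1}(Z_j) = Z_j$ for every $j$, so each of $Z_1, \dots, Z_r$ is a completely invariant component of $g$. As a transcendental entire function has at most one completely invariant component \cite{iB70}, we get $r = 1$; then $\Omega = Z_1 = U$ and $f^{-1}(U) = U$, so $U$ is completely invariant, contradicting the hypothesis that $U$ is not.

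I expect the main obstacle to be the bookkeeping in the counting step: pinning down the domain $I$ of the auxiliary map $\tau$, checking that every component of every $f^{-1}(Z_j)$ is counted once and only once, and extracting from $\#I \geq r$ both that $U$ must be periodic and that $\tau$ must be a single cycle. Once that structure is in hand, the reduction via $g = f^r$ together with the uniqueness of completely invariant components is immediate.
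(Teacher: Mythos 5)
Your proof is correct, and it ends exactly where the paper's does: both arguments reduce the finiteness assumption to the existence of a cycle of Fatou components each of which is completely invariant under an iterate ($f^p$ in the paper, your $g=f^{r}$), and then invoke Baker's theorem that a transcendental entire function has at most one completely invariant Fatou component. Where you genuinely differ is in how the intermediate structure is extracted. The paper first shows that a non-periodic $U$ would have an infinite tree of preimages (each level of $f^{-n}(U)$ forces a component not seen at earlier levels), concludes that $U$ is periodic, and then asserts that no element of the cycle has preimages outside the cycle. Your double count over the finite family $\Omega = Z_1 \cup \dots \cup Z_r$ replaces both steps at once: the inequality $\#I \geq r$, forced by Picard's theorem via $f^{-1}(Z_j) \neq \emptyset$ for every $j$, simultaneously yields that $U$ is periodic, that each $f^{-1}(Z_j)$ has exactly one component, and that $\tau$ is a single $r$-cycle. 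This is arguably tidier: the paper's claim that no element of the cycle has preimages outside the cycle is left implicit there (one would have to rerun the infinite-tree argument on a hypothetical side branch), whereas in your version it falls out of the equality case of the count. The only things to spell out in a final write-up are the standard facts you lean on in passing: that a component of $f^{-1}(W)$, for $W$ a Fatou component, is itself a full Fatou component, and that $F(f^{r}) = F(f)$, so that Baker's theorem applies to $g$ with the same components $Z_j$.
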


\begin{proof}
Assume, for a contradiction, that $ U $ is a component of $ F(f) $ which is not completely invariant and has only finitely many distinct preimages. 

We first show $ U $ must be periodic.  For suppose that $ U $ is non-periodic, and let $ U' $ be any preimage of $ U $.  Then $ U' $ is a component of $ f^{-n}(U) $ for some $ n \in \N $, and since $ U $ is not periodic, there must be at least one component of $ f^{-1}(U') $ which is distinct from every component of $ f^{-k}(U) $ for $ k = 1, \ldots, n. $  As this is true for all components of $ f^{-n}(U) $ and for every $ n \in \N $, $ U $ must have infinitely many distinct preimages, which is against our assumption. 

Thus $ U $ must belong to some cycle of period $ p > 1 $ in which no element in the cycle has preimages outside the cycle. But then each of the $ p $ distinct elements in the cycle is a completely invariant component of $ F(f^p) $, contradicting the fact that a \tef\ can have at most one completely invariant Fatou component \cite{iB70}.  This contradiction completes the proof.   
\end{proof}
 
\begin{proof}[Proof of Theorem \ref{no unbounded}]

We first suppose that $ F(f) $ has an unbounded component ~$ V $, and seek a contradiction.  

Let $ R>0 $ be so large that $ B(0,R) \cap J(f) \neq \emptyset $. Then we claim that infinitely many preimages of $ V $ must meet $ B(0,R). $  

To show this, note that $ V $ has infinitely many distinct unbounded preimages, by Lemma \ref{infinite}.  Thus, if only finitely many of these preimages meet $ B(0,R) $, there must be some preimage $ W $ of $ V $ that is not periodic and does not meet $ B(0,R). $  But since $ B(0,R) \cap J(f) \neq \emptyset $, it follows from the blowing up property of $ J(f) $ that there exists $ N \in \N $ such that $ f^n(B(0,R)) $ meets $ W $ for all $ n \geq N. $  Thus we may choose a strictly increasing sequence $ (n_j)_{j \in \N} $ of integers greater than $ N $ such that some component $ X_{n_j} $ of $ f^{-n_j}(W) $ meets $ B(0,R) $ for all $ j \in \N $.  

Now suppose that two such components coincide.  Then there exist $ k, l \in \N $ with $ k > l $ (and thus $ n_k > n_l $), and $ X_{n_k} = X_{n_l} = X, $ say, such that
\[ f^{n_l}(X) \subset W \]
and
\[ f^{n_k}(X) \subset W. \]
It then follows that $ f^{n_k - n_l}(W) \subset W $, so that $ W $ is periodic, contrary to our assumption.  This proves the claim.

Now since every point on the circle $ C(0,R) $ lies at the same spherical distance from $ \infty $, the fact that infinitely many unbounded preimages of $ V $ meet $ B(0,R) $ contradicts property (2) in the statement of Theorem \ref{no unbounded}.  Thus it follows that there are no unbounded components of $ F(f) $.

Now let $ r > 0 $, and let $ \varepsilon > 0 $ be less than the spherical distance of the circle $ C = C(0,r) $ from $ \infty $.  Let $ \lbrace U_j : j \in \N \rbrace $ be the collection of components of $ F(f) $ that meet $ C $.  This collection may be empty, finite or countably infinite, but
\begin{enumerate}[(i)]
\item we have just proved that none of the $ U_j $ is unbounded, and
\item by property (2) in the statement of the theorem, at most finitely many of the $ U_j $ have spherical diameters greater than $ \varepsilon $.
\end{enumerate}
It follows that $ \bigcup_{j\in \N} \overline{U}_j $ must be bounded.  If we now let
\[ D = C \cup \bigcup_{j \in \N} \overline{U}_j, \]
and put
\[ G = \text{int}(\widetilde{D}), \] 
we then have that $ G $ is a bounded, \sconn\ domain whose  boundary $ \partial G $ lies in $ J(f). $  

Now choose $ r' > r $ such that $ G \subset B(0,r') $, and let $ \varepsilon' > 0 $ be less than the spherical distance of the circle $ C' = C(0,r') $ from $ \infty $.  Then we may proceed exactly as above to obtain a  
bounded, \sconn\ domain $ G' \supset G $ whose  boundary $ \partial G' $ lies in $ J(f) $.
  
In this way, we may evidently construct a sequence $ (G_k)_{k \in \N} $ of bounded, \sconn\ domains such that $ G_{k+1} \supset G_k $ and $ \partial G_k \subset J(f) $ for each $ k \in \N $, and $ \bigcup_{k \in \N} G_k = \C $. 
This completes the proof.
\end{proof}

\begin{proof}[Proof of Corollary \ref{spider}]
To prove that $ J(f) $ is a \sw, it only remains to show that $ J(f) $ is connected.  But since there are no \mconn\ Fatou components, this is immediate from Lemma \ref{kisaka}.
\end{proof}

\section{Proof of Theorems \ref{rings} and \ref{burjor}} 
\label{Thm rings}
\setcounter{equation}{0} 

In this section, we first prove Theorem \ref{rings}, which says that, if $ f $ is a \tef\ such that $ J(f) $ is a \sw, then there exists a subset of $ J(f) $ which is dense in $ J(f) $ and consists of points $ z $ with the property that any \nhd\ of $ z $ contains a continuum in $ J(f) $ surrounding ~$ z $ and, furthermore, that each such point $ z $ is a buried point of $ J(f) $ at which $ J(f) $ is \lconn.  The method of proof is similar to that adopted by Bergweiler~\cite{wB00} in his alternative proof of a result due to Dom\'{i}nguez \cite{PD1}. 

We make use of the following corollary to the Ahlfors islands theorem, proved in ~\cite{wB00} for a wide class of meromorphic functions, but here stated in a form applicable to \tef s since this is all we will need.

\begin{proposition} \label{islands}
Let $ f $ be a \tef, and let $ D_1, D_2, D_3 \subset \C $ be bounded Jordan domains with pairwise disjoint closures.  Let $ V_1, V_2, V_3 $ be domains satisfying $ V_j \cap J(f) \neq \emptyset $ and $ V_j \subset D_j $ for $ j \in \lbrace 1, 2, 3 \rbrace. $ Then there exist $ \mu \in \lbrace 1, 2, 3 \rbrace $, $ n \in \N $ and a domain $ U \subset V_\mu $ such that $ f^n : U \rightarrow D_\mu $ is conformal.
\end{proposition}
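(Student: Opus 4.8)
The plan is to deduce the proposition from Bergweiler's version of the Ahlfors five islands theorem in \cite{wB00}, where the corresponding statement is established for a wide class of meromorphic functions. I would first isolate the following ``three islands'' statement for iterates, which carries all of the hard analysis and which I would simply quote from \cite{wB00}: if $f$ is a \tef, $V$ is a domain with $V \cap J(f) \neq \emptyset$, and $E_1, E_2, E_3$ are bounded Jordan domains with pairwise disjoint closures, then there exist $j \in \{1,2,3\}$, $n \in \N$ and a domain $U \subset V$ such that $f^n : U \to E_j$ is conformal. A point where $V$ meets $J(f)$ is exactly a point of non-normality of the family $\{f^n\}$, which is what licenses the islands theorem; and the reduction from the five Jordan domains needed in general to three is where being \emph{entire} is used, since the rescaled limits of iterates are then non-constant entire functions, which possess a simple island over at least one of any three disjoint bounded Jordan domains (cf.\ \cite{wB00}). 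I expect essentially all of the difficulty to be contained in this quoted statement; the remainder is a short combinatorial device for replacing ``an island over \emph{some} $D_j$ inside \emph{some} $V_i$'' by ``an island over $D_\mu$ inside $V_\mu$ for a \emph{common} index $\mu$''.

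For that device, for $i, j \in \{1,2,3\}$ I would write $i \rightsquigarrow j$ to mean that there exist $n \in \N$ and a domain $U \subset V_i$ with $f^n : U \to D_j$ conformal. Applying the three islands statement to each $V_i$ in turn --- using $V_i \cap J(f) \neq \emptyset$ and the three domains $D_1, D_2, D_3$ --- shows that for every $i$ there is at least one $j$ with $i \rightsquigarrow j$. I would then verify that $\rightsquigarrow$ is transitive: given $f^{n_1} : U_1 \to D_j$ conformal with $U_1 \subset V_i$ and $f^{n_2} : U_2 \to D_k$ conformal with $U_2 \subset V_j$, the inclusion $U_2 \subset V_j \subset D_j$ makes $U_1' := (f^{n_1}|_{U_1})^{-1}(U_2)$ a domain contained in $V_i$ on which $f^{n_1}$ restricts to a conformal bijection onto $U_2$, so composing with $f^{n_2}$ gives that $f^{n_1 + n_2} : U_1' \to D_k$ is conformal, i.e.\ $i \rightsquigarrow k$.

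Finally, since $\{1,2,3\}$ is finite and every element has an outgoing edge, starting at $1$ and following edges repeatedly must revisit a vertex, producing a closed walk $w_a \rightsquigarrow w_{a+1} \rightsquigarrow \cdots \rightsquigarrow w_a$; transitivity then yields $w_a \rightsquigarrow w_a$, which is precisely the assertion that there exist $\mu \in \{1,2,3\}$, $n \in \N$ and a domain $U \subset V_\mu$ with $f^n : U \to D_\mu$ conformal. The only points requiring care are the correct application of the quoted islands statement (in particular, that $D_1, D_2, D_3$ being bounded Jordan domains with pairwise disjoint closures is what the three islands version needs) and the routine check that the various restrictions and compositions of conformal bijections remain conformal bijections between domains; beyond that the argument is elementary.
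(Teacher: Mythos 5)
Your argument is correct, but note that there is nothing in the paper to compare it against: Proposition \ref{islands} is not proved there at all, being quoted directly from Bergweiler \cite{wB00} (where it is established for a wide class of meromorphic functions using five Jordan domains, three sufficing in the entire case because the rescaled limits are entire and omit $\infty$). Your reconstruction is sound and is in fact close in spirit to how the result is obtained in \cite{wB00}. The single-domain ``three islands for iterates'' statement you quote is exactly the Ahlfors-islands normality criterion applied to the holomorphic, non-normal family $\{f^n|_V\}$, and it is legitimate to take it as the black box carrying all the hard analysis. The combinatorial device is then airtight: the transitivity of $\rightsquigarrow$ is precisely where the hypothesis $V_j \subset D_j$ enters (it guarantees $U_2 \subset D_j$, so that $(f^{n_1}|_{U_1})^{-1}(U_2)$ makes sense as a subdomain of $V_i$), every vertex of the finite digraph has an outgoing edge, and any closed walk collapses under transitivity to a self-loop $\mu \rightsquigarrow \mu$, with total iterate $n = n_1 + \cdots \geq 1$ as required. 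The one presentational caveat is that you should make clear that the quoted islands statement really is available in the literature in the form you use (non-normality of $\{f^n\}$ on $V$ yields, for any three bounded Jordan domains with pairwise disjoint closures, a simple island over one of them inside $V$); granted that, the rest of your argument is complete and elementary.
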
  

\begin{proof}[Proof of Theorem \ref{rings}]
Since $ J(f) $ is a \sw, it follows from the definition that we may choose a sequence $ (G_k)_{k \in \N} $ of bounded, \sconn\ domains with \textit{disjoint} boundaries $ \partial G_k, $ and such that
\begin{itemize}
\item $ G_{k+1} \supset G_k $, for $ k \in \N $,
\item $ \partial G_k \subset J(f) $, for $ k \in \N $ and 
\item $ \bigcup_{k \in \N} G_k = \C. $
\end{itemize}

Now let $ G $ be any domain in the sequence $ (G_k)_{k \in \N} $ that meets $ J(f) $, and let $ z \in G \cap J(f) $ be such that $ z \notin E(f) $. Then, by Picard's theorem, there are infinitely many preimages of $ z $ under $ f $, and each of these must lie in a component of $ f^{-1}(G) $.  Note that a component of $ f^{-1}(G) $ can in general be either bounded or unbounded, and can contain more than one preimage of $ z $ under $ f $.   

Let $ w_1 $ be some preimage of $ z $ under $ f $, and let $ W_1 $ be the component of $ f^{-1}(G) $ containing $ w_1 $.  Then we can assume that, for some $ k \in \N $,
\[ w_1 \in G_{k+2} \setminus \overline{G}_k. \]   
It follows that $ w_1 $ lies in a bounded domain $ V_1 $ which is a component of
\[ (G_{k+2} \setminus \overline{G}_k) \cap W_1.\]
Furthermore, $ V_1 \cap J(f) \neq \emptyset $ (because $ w_1 \in J(f) $) and, since the boundaries of both $ G_{k+2} \setminus \overline{G}_k $ and of $ W_1 $ lie in $ J(f) $, we also have $ \partial V_1 \subset J(f). $ 

Now since $ \overline{G}_{k+3} $ is bounded, it can contain only finitely many preimages of $ z $ and thus we may choose another preimage of $ z $ under $ f $, $ w_2 $ say, that lies outside $ \overline{G}_{k+3} $ and in some component $ W_2 $ of $ f^{-1}(G) $.  Proceeding exactly as before, we find that, for some $ k' \geq k+3 $, $ w_2 $ lies in a bounded domain $ V_2 $ which is a component of
\[ (G_{k'+2} \setminus \overline{G}_{k'}) \cap W_2.\]
We also have $ V_2 \cap J(f) \neq \emptyset $, and $ \partial V_2 \subset J(f). $  Note that $ W_2 $ is not necessarily distinct from $ W_1 $, but that, by construction, $ V_1 $ and $ V_2 $ have disjoint closures.

Continuing in the same way, we can evidently construct domains $ V_1, V_2, V_3 $ with pairwise disjoint closures such that, for $ j = 1,2,3, $
\begin{itemize}
\item $ V_j \cap J(f) \neq \emptyset $; and
\item $ \partial V_j \subset J(f) $.
\end{itemize}
Furthermore, it follows from \cite[Theorem 3.3, p. 143]{New} that we can then choose bounded, simply connected, Jordan domains $ D_1, D_2, D_3 $ with pairwise disjoint closures such that $ \overline{V}_j \subset D_j $ for $ j = 1, 2, 3 $.  

Everything is now in place for us to apply Proposition \ref{islands}, and thus we obtain $ \mu \in \lbrace 1, 2, 3 \rbrace $, $ n \in \N, $ and a domain $ U \subset V_\mu $ such that $ f^n : U \rightarrow D_\mu $ is conformal.  

Now let $ \phi $ be the branch of the inverse function $ f^{-n} $ which maps $ D_\mu $ onto $ U. $  Then $ \phi $ must have a fixed point $ z_0 \in U \subset V_\mu $.  Furthermore, by the Schwarz lemma, this fixed point must be attracting, and because $ \phi(D_\mu) = U $ where $ \overline{U} $ is a compact subset of $ D_\mu $, we have that $ \phi^k(z) \rightarrow z_0 $ as $ k \rightarrow \infty $, uniformly for $ z \in D_\mu $.

Since $ z_0 $ is an attracting fixed point of $ \phi $, it is a repelling fixed point of $ f^n $ and hence a repelling periodic point of $ f $.  Thus $ z_0 $ lies in $ J(f) $.  

Now $ z_0 = \phi^k(z_0) \in \phi^k(V_\mu)$ for all $ k \in \N. $  Furthermore, $ \text{diam } \phi^k(\overline{V_\mu}) \rightarrow 0 $ as $ k \rightarrow \infty $.  It follows that
\begin{equation*}
\label{single}
\bigcap_{k \in \N} \phi^k(\overline{V_\mu}) = \lbrace z_0 \rbrace, 
\end{equation*}
and hence that, for any \nhd\ $ N $ of $ z_0 $, there is some $ K \in \N $ such that $ \phi^K(\overline{V_\mu}) \subset N $. But $ \partial V_\mu $ lies in $ J(f) $ and $ \phi $ is conformal, so we have $ \partial \phi^K (V_\mu) = \phi^K (\partial V_\mu) \subset J(f) $, and since $ \partial \phi^K (V_\mu) $ surrounds $ z_0 $, we have shown that an arbitrary \nhd\ $ N $ of $ z_0 $ contains a continuum in $ J(f) $ that surrounds~$ z_0. $ 

To show that points with this property are dense in $ J(f) $, we use the fact that $ J(f) $ is the closure of the backwards orbit $ O^-(z) $ of any point $ z \in J(f) \setminus E(f) $. Now we may always choose our domains $ V_j $ to ensure that $ z_0 \notin E(f) $.  Therefore, since $ f $ is an open mapping and $ J(f) $ is completely invariant, it follows that each point $ z $ in the backwards orbit $ O^-(z_0) $ has the property that any \nhd\ of $ z $ contains a continuum in $ J(f) $ that surrounds~$ z $.  

Now let $ z $ be a point with this property. Evidently, $ z $ does not lie on the boundary of any component of $ F(f) $, and so is a buried point.  Let $ V $ be an open \nhd\ of $ z $ in the relative topology on $ J(f) $, so that $ V = V' \cap J(f) $ for some open \nhd\ $ V' $ of $ z $ in~$ \C $.  We may assume without loss of generality that $ V' $ is a disc (by making $ V $ smaller if necessary).  Then it follows from the assumed property of $ z $ that $ V' $ contains a continuum $ C $ in $ J(f) $ surrounding ~$ z $.  

Now let $ X = \widetilde{C} \cap J(f) $ (recall that $ \widetilde{C} $ denotes the union of $ C $ and its bounded complementary components).  Since $ J(f) $ is a \sw, it is connected, and it follows that $ X $ is also connected.  But $ \widetilde{C} \subset V' $, so $ X \subset V $, and thus we have shown that any \nhd\ $ V $ of $ z $ in the relative topology on $ J(f) $ contains a connected \nhd\ of $ z $.  It then follows from the definition that $ J(f) $ is \lconn\ at $ z $.  This completes the proof.
\end{proof}

\begin{remark}
In \cite[Theorem 1.6]{O10}, we showed that, if $ f $ is a \tef, $ R > 0 $ is such that $ M(r, f) > r $ for $ r \geq R $, and $ A_R(f) $ is a \sw, then $ J(f) $ has a dense subset of \textit{periodic} buried points (see Section \ref{prelim} for the definition of the set $ A_R(f) $).  We remark that, using a similar method of proof, it is possible to extend Theorem \ref{rings} to show that, if $ f $ is a \tef\ such that $ J(f) $ is a \sw, then there exists a dense subset of periodic buried points, at each of which $ J(f) $ is \lconn.  We omit the details.
\end{remark}

\begin{proof}[Proof of Theorem \ref{burjor}]
It is immediate from Theorems \ref{lcsw} and \ref{rings} that $ J(f) $ is a \sw\ and that $ J_r(f) \neq \emptyset. $ The rest of part (a) follows from Lemma \ref{whyburn2}.  For part (b), it remains to prove that the $ J(f) $ \sw\ contains a sequence of loops that are Jordan curves.

Let $ z $ be a buried point in $ J(f) $. Then, by part (a), there is a Jordan curve $ C $ in $ J(f) $ surrounding $ z $.  Now let $ G = \text{int} (\widetilde{C}) $, and let $ \gamma_n $ be the outer boundary component of $ f^n(G). $  Then, by the blowing up property of $ J(f) $,
\[ \mathrm{dist}(\gamma_n, 0) \to \infty \text{  as  } n \to \infty. \]
Since $ \gamma_n \subset f^n(C) $, it follows that $ \gamma_n $ is a Jordan curve in $ J(f) $.

Thus $ G_n = \mathrm{int}(\widetilde{\gamma_n}) $ is a bounded Jordan domain for each $ n \in \N $.  Furthermore, $ \partial G_n \subset J(f) $ for each $ n \in \N $, and we can choose a subsequence $ (G_{n_k})_{k \in \N} $ such that $ \bigcup_{k \in \N} G_{n_k} = \C, $ and $ G_{n_{k+1}} \supset G_{n_k} $ for $ k \in \N $.   It follows that, by relabelling $ G_{n_k} $ as $ G_k $ for $ k \in \N $, we obtain a sequence of bounded Jordan domains $ (G_k)_{k \in \N} $ with the required properties, and this completes the proof.
\end{proof}

\section{The residual Julia set} 
\label{residual}
\setcounter{equation}{0} 

In this section, we give some new results on the residual Julia set $ J_r(f) $ of a \tef\ $ f $, and compare the results on $ J_r(f) $ in Theorems \ref{rings} and \ref{burjor} with those obtained by other authors.

Recall that the residual Julia set $ J_r(f) $ of a map $ f $ is the set of buried points, i.e. the set of points in $ J(f) $ that do not lie on the boundary of any Fatou component.

First, we draw attention to a corollary of the following result due to Rippon and Stallard.

\begin{lemma}[Theorem 5.2 in \cite{RS11}] 
\label{rs}
Let $ f $ be a \tef, and suppose that the set $ S $ is completely invariant under $ f $ and that $ J(f) = \overline{S \cap J(f)}. $  Then exactly one of the following holds:
\begin{enumerate}[(1)]
\item $ S $ is connected;
\item $ S $ has exactly two components, one of which is a singleton $ \lbrace \alpha \rbrace $, where $ \alpha $ is a fixed point of $ f $ and $ \alpha \in E(f) \cap F(f) $;
\item $ S $ has infinitely many components.
\end{enumerate}
\end{lemma}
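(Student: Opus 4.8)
The plan is to prove Lemma~\ref{rs} by a trichotomy argument that exploits the fact that $S$ contains points of $J(f)$ whose backward orbits are dense in $J(f)$. First I would dispose of the easy cases. If $S$ has exactly one component we are in case (1). So suppose $S$ is disconnected, and let $S_0$ be a component of $S$. Since $S$ is completely invariant and $f$ is a non-injective open map, $f(S_0)$ and each component of $f^{-1}(S_0)$ are again unions of components of $S$; the key structural observation is that $f$ permutes the components of $S$ in a way controlled by the local degree of $f$.

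The main step is to show that if $S$ is disconnected and has only finitely many components, then in fact $S$ has exactly two components, one of which is a singleton in $E(f) \cap F(f)$. Suppose $S$ has components $S_1, \dots, S_m$ with $2 \le m < \infty$. Since $S$ is completely invariant, $f$ induces a map on the finite set $\{S_1,\dots,S_m\}$; passing to an iterate I may assume $f$ fixes each component setwise, so each $S_j$ is completely invariant under $f^p$ for some $p$. Now I would use the hypothesis $J(f) = \overline{S \cap J(f)}$ together with the blowing up property: at least one $S_j$, say $S_1$, must contain points of $J(f)$, and by the blowing up property the backward orbit of any such non-exceptional point is dense in $J(f)$, so it enters every $S_j$ that meets $J(f)$ — forcing $S_1$ to equal the closure of that backward orbit, hence $S_1 \supset J(f)$. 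Consequently every other component $S_j$, $j \ge 2$, is disjoint from $J(f)$, so $S_j \subset F(f)$, and being completely invariant (under $f^p$) and contained in $F(f)$, such a component must be very restricted. Here I would invoke the standard fact (as in \cite{iB70}, cited earlier for completely invariant Fatou components) that a completely invariant set inside $F(f)$ which is not a full Fatou component behaves like an exceptional point: the only way a component of a completely invariant set can avoid $J(f)$ entirely is for it to be a single exceptional point fixed by $f$. This shows $m = 2$ and that $S_2 = \{\alpha\}$ with $\alpha \in E(f) \cap F(f)$ a fixed point; it also shows the iterate reduction was harmless. If $S$ is disconnected but not of this form, the argument shows it cannot have finitely many components, so it has infinitely many — case (3). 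Finally I would check mutual exclusivity, which is clear since the three cases are distinguished by $|{\text{components of }S}| \in \{1, 2, \infty\}$ and by whether a two-component example has the stated singleton structure.

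The hard part will be the argument that a finite completely invariant set meeting $F(f)$ in a nontrivial component forces that component to be a single exceptional point: one must rule out, for instance, a completely invariant component of $S$ that is a union of finitely many full Fatou components, or a continuum lying in $F(f)$. The cleanest route is to note that if a component $S_j \subset F(f)$ is not a singleton, then $\overline{S_j}$ is a non-degenerate continuum that is forward invariant under some iterate $f^p$ and has finitely many preimages under $f^p$ among the components of $S$; by Lemma~\ref{infinite}-type reasoning (or directly, since $f^p$ restricted near $S_j$ cannot be a global homeomorphism of the plane for a \tef), one derives a contradiction with $f$ being transcendental, unless $S_j$ reduces to an exceptional point. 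I would present this carefully, being explicit about the passage to the iterate and about why the exceptional point must be fixed and lie in $F(f)$ (an exceptional point in $J(f)$ would be isolated in $J(f)$, contradicting that $J(f)$ is perfect).
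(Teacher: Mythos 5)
First, note that the paper does not prove this statement at all: Lemma~\ref{rs} is quoted verbatim as Theorem~5.2 of \cite{RS11}, so there is no internal proof to compare against and your argument has to stand on its own. Its overall shape (reduce the finite case to one ``large'' component carrying $J(f)$ plus exceptional singletons) is a plausible route, but the two steps on which everything hinges are, as written, either incorrect or missing. The central deduction is garbled: after passing to the iterate $f^p$ fixing each component, the backward orbit of a non-exceptional point of $S_1\cap J(f)$ stays \emph{inside} $S_1$ (it does not ``enter every $S_j$ that meets $J(f)$''), and its density in $J(f)$ yields only $\overline{S_1}\supset J(f)$, not $S_1\supset J(f)$ --- components of $S$ need not be closed (think $S=I(f)$). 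From $\overline{S_1}\supset J(f)$ the conclusion that every other component misses $J(f)$ \emph{can} be rescued, but only via an observation you do not make: components of $S$ are closed in the subspace topology on $S$, so $\overline{S_1}\cap S_j=\emptyset$ for $j\neq 1$, whence $S_j\cap J(f)\subset S_j\cap\overline{S_1}=\emptyset$. Also, the reduction ``pass to an iterate so each component is fixed'' presupposes that the induced map on the finite set of components is a permutation; this fails exactly when some component is a singleton omitted value, which is the case relevant to conclusion (2), so it needs separate treatment rather than the assertion that it is ``harmless.''

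The step you yourself flag as ``the hard part'' --- that a component $S_j$ with $S_j\cap J(f)=\emptyset$ must be a singleton exceptional point --- is not actually carried out, and the mechanism you propose (a Lemma~\ref{infinite}-type preimage count, or ``$f^p$ near $S_j$ cannot be a global homeomorphism'') does not obviously close it. The clean arguments are: (i) if $S_j$ has at least two points, then since $\C\setminus S_j$ is completely invariant under $f^p$, the iterates omit two values there and Montel gives $J(f)\subset\overline{S_j}$; then any point of $S_1\cap J(f)\subset\overline{S_j}$ together with $S_j$ forms a connected subset of $S$ meeting two distinct components --- a contradiction; (ii) if $S_j=\{w\}$ with $w\notin E(f)$, then every component of $S$ meeting the infinite set $O^-(w)$ is contained in a discrete fibre $f^{-n}(w)$, hence is a singleton, producing infinitely many components. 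Neither appears in your sketch. Two further points: your parenthetical reason that $\alpha\in F(f)$ (``an exceptional point in $J(f)$ would be isolated in $J(f)$'') is false --- for $f(z)=e^z$ one has $0\in E(f)\cap J(f)=E(f)\cap\C$ and $0$ is not isolated; the correct reason is that if $\alpha\in J(f)\subset\overline{S_1}$ then $\{\alpha\}\cup S_1$ would be connected and $\{\alpha\}$ could not be a separate component. Finally, the assertion that $\alpha$ is a \emph{fixed} point of $f$ (not merely periodic, nor merely an omitted value mapped into the large component) is stated but never derived; this requires its own argument.
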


For the residual Julia set, we obtain the following.

\begin{corollary}
Let $ f $ be a \tef\ with non-empty residual Julia set $ J_r(f) $.  Then either $ J_r(f) $ is connected, or else $ J_r(f) $ has infinitely many components.
\end{corollary}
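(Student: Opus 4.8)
The plan is to deduce this Corollary directly from Lemma~\ref{rs} by taking $S = J_r(f)$ and verifying its hypotheses. The residual Julia set is completely invariant: since $f$ is an open mapping and $J(f)$ is completely invariant, a point lies on the boundary of a Fatou component if and only if its image (resp. each preimage) does, so $f(J_r(f)) \subset J_r(f)$ and $f^{-1}(J_r(f)) \subset J_r(f)$. The second hypothesis, $J(f) = \overline{J_r(f) \cap J(f)} = \overline{J_r(f)}$, requires that buried points be dense in $J(f)$; here is where I would have to be a little careful, because $J_r(f)$ is only assumed non-empty, not a priori dense. I would resolve this by noting that $J_r(f)$ is completely invariant and non-empty, so it contains the backward orbit $O^-(z)$ of any $z \in J_r(f) \setminus E(f)$ (and one can always choose such a $z$, since $E(f)$ has at most one point while $J_r(f)$, being non-empty, contains a Julia point whose backward orbit avoids $E(f)$ after one step). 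Since $J(f) = \overline{O^-(z)}$ for any $z \in J(f) \setminus E(f)$ by the blowing-up property, we get $J(f) = \overline{J_r(f)}$, so the second hypothesis of Lemma~\ref{rs} holds.

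With both hypotheses verified, Lemma~\ref{rs} gives that $J_r(f)$ is either connected, has exactly two components (one a singleton $\{\alpha\}$ with $\alpha \in E(f) \cap F(f)$), or has infinitely many components. The key remaining step is to rule out case~(2). This is immediate: if $\alpha \in F(f)$ then $\alpha \notin J(f)$, so $\alpha$ cannot be a buried point, i.e. $\{\alpha\} \not\subset J_r(f)$. Hence case~(2) is impossible for $S = J_r(f)$, and only the connected case or the infinitely-many-components case can occur, which is exactly the statement of the Corollary.

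I expect the main (though modest) obstacle to be the density argument: making rigorous that a non-empty completely invariant subset of $J(f)$ which meets $J(f) \setminus E(f)$ is automatically dense in $J(f)$. Everything else — complete invariance of $J_r(f)$ and the elimination of case~(2) — is routine. One subtlety worth a sentence in the writeup is why $J_r(f)$ must contain a point outside $E(f)$: since $E(f)$ consists of at most one point, if $J_r(f) = \{p\}$ with $p \in E(f)$ then $p$ is an exceptional point in $J(f)$, but exceptional points in the Julia set of a \tef\ are repelling fixed points which are not buried (they lie in $\partial F(f)$ whenever $F(f) \neq \emptyset$; and if $F(f) = \emptyset$ then every point of $J(f) = \C$ is buried, forcing $J_r(f) = \C$, not a singleton), so this degenerate possibility does not arise.
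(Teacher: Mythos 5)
Your argument is correct and follows essentially the same route as the paper: apply Lemma~\ref{rs} with $S = J_r(f)$, using the complete invariance of $J_r(f)$ and its density in $J(f)$ (which the paper simply quotes as standard), and discard case~(2) because $J_r(f) \cap F(f) = \emptyset$. The only quibble is your parenthetical claim that an exceptional point in $J(f)$ must be a repelling fixed point --- for a \tef\ the exceptional point need not be fixed (e.g.\ $0$ for $e^z$) --- but this is harmless, since if $J_r(f)$ were a singleton it would be connected and the corollary would hold trivially.
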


\begin{proof}
Since $ J_r(f) $ is completely invariant and dense in $ J(f) $, it is evident that the conditions of Lemma \ref{rs} hold with $ S = J_r(f) $.  Case (2) cannot occur since $ J_r(f) \cap F(f) = \emptyset. $ 
\end{proof}

Next, we show that there are certain classes of functions for which the residual Julia set is not only connected, but is in fact a \sw.  Our result is expressed in terms of the \textit{fast escaping set}, defined as follows:
\[ A(f) = \lbrace z \in \C : \text{there exists } \ell \in \N \text{ such that } \vert f^{n+ \ell}(z) \vert \geq M^n(R,f), \text{ for } n \in \N \rbrace.  \]
We refer to \cite{RS10a} for a detailed study of $ A(f) $ and references to earlier work.

\begin{theorem}
\label{residualsw}
Let $ f $ be a \tef, let $ R > 0 $ be such that $ M(r, f) > r $ for $ r \geq R $, and let $ A_R(f) $ be a \sw.  Assume also that $ A(f) \subset J(f). $  Then $ J_r(f) $ is a \sw.
\end{theorem}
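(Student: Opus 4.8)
The plan is to combine three ingredients: Lemma \ref{fast} (which tells us $J(f)$ itself is a \sw\ once we know $A_R(f)$ is one and there are no \mconn\ Fatou components), the hypothesis $A(f) \subset J(f)$ together with known facts about $A(f)$ (it has no bounded components and meets every loop of any \sw), and Theorem \ref{rings} (to locate buried points at which $J(f)$ is \lconn, with Jordan-curve-like continua surrounding them). The goal is to produce a sequence of bounded \sconn\ domains whose boundaries lie in $J_r(f)$ and whose union is $\C$.

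\emph{First} I would check that the hypotheses already force $J(f)$ to be a \sw. Since $A_R(f)$ is a \sw, it is in particular unbounded and connected; if $f$ had a \mconn\ Fatou component $U$, then $U$ would be a \bwd\ and its boundary would separate the plane, but the loops of the $A_R(f)$ \sw\ (which lie in $A_R(f) \subset A(f) \subset J(f)$) would have to meet $\partial U$ eventually — more cleanly, $A(f) \subset J(f)$ rules out \mconn\ components directly, since multiply connected Fatou components always contain points of $A(f)$ (by Rippon--Stallard / Bergweiler--Rippon--Stallard). Hence by Lemma \ref{fast}, $J(f)$ is a \sw. Moreover $A(f) \subset J(f)$ means the loops $\partial G_k$ of the $J(f)$ \sw\ can be taken inside $A(f)$, or at least that $A(f)$ meets every such loop.

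\emph{The key step} is to upgrade the loops from ``lying in $J(f)$'' to ``lying in $J_r(f)$''. Here I would use that $A(f) \cap J(f) = A(f)$ consists of buried points: a point of $A(f)$ cannot lie on the boundary of any Fatou component, because every Fatou component meeting $A(f)$ would be a \mconn\ (Baker) wandering domain, which we have excluded; so if $z \in A(f)$ were on $\partial U$ for a Fatou component $U$, iterating and using that $A(f)$ is completely invariant (forward and, up to the $\ell$-shift, backward) and that $\partial U$ maps into boundaries of Fatou components, one derives that $U$ eventually becomes \mconn\ or $z \in A(f) \cap \overline{U}$ forces a contradiction with the structure of escaping Fatou components. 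Thus $A(f) \subset J_r(f)$. Now take the $A_R(f)$ \sw\ itself: its loops $\partial G_k$ satisfy $\partial G_k \subset A_R(f) \subset A(f) \subset J_r(f)$, they are boundaries of bounded \sconn\ domains, $G_k \subset G_{k+1}$, and $\bigcup_k G_k = \C$. Finally, $J_r(f)$ is connected: it is dense in $J(f)$ (by Theorem \ref{rings}) and completely invariant, and Lemma \ref{rs} applied with $S = J_r(f)$ gives that $J_r(f)$ is connected or has infinitely many components, but the presence of the loops $\partial G_k \subset J_r(f)$, each surrounding the previous and exhausting $\C$, forces connectedness (any two points of $J_r(f)$ lie in some $\overline{G}_k$, and $\widetilde{\partial G_k} \cap J_r(f)$ is connected since $J(f)$ is connected and $J_r(f)$ is dense in it). Hence $J_r(f)$ is a \sw.

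\emph{The main obstacle} I expect is the claim $A(f) \subset J_r(f)$ — i.e. that no point of the fast escaping set lies on the boundary of a Fatou component, given only $A(f) \subset J(f)$. The natural argument is: if $z \in A(f) \cap \partial U$, then since $U$ is a Fatou component and $A(f) \subset J(f)$, $U$ cannot be contained in $A(f)$, yet $z \in \overline{A(f) \cap U^c}$; one then invokes that a Fatou component whose boundary meets $A(f)$ must be a \mconn\ wandering domain (by the structure theory of $A(f)$, e.g. Rippon--Stallard), contradicting the absence of \mconn\ components. Making this rigorous requires citing the correct statement from the $A(f)$ literature, and I would need to be careful that ``$A_R(f)$ is a \sw'' plus ``$A(f) \subset J(f)$'' is genuinely enough to exclude \mconn\ Fatou components and to place the loops of a \sw\ for $J(f)$ inside $A(f)$. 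A secondary, more routine point is extracting from the nested exhausting loops the formal verification of connectedness of $J_r(f)$ via Lemma \ref{rs}, ruling out the infinitely-many-components alternative.
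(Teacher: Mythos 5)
Your overall route is the paper's: rule out multiply connected Fatou components (so that Lemma \ref{fast} gives that $J(f)$ is a spider's web), show $A(f) \subset J_r(f)$, and then use loops lying in the fast escaping set as the loops of a spider's web for $J_r(f)$. However, the step you correctly flag as the main obstacle --- that no point of $A(f)$ lies on the boundary of any Fatou component --- is not closed by your sketch. The argument you offer conflates ``$U$ meets $A(f)$'' with ``$\partial U$ meets $A(f)$'': under the hypothesis $A(f)\subset J(f)$ no Fatou component meets $A(f)$ at all, so the assertion that such a component ``would be a multiply connected wandering domain'' is vacuous and says nothing about boundary points; and in general boundaries of Fatou components certainly can meet $A(f)$ (for a fast escaping wandering domain one has $\overline{U}\subset A(f)$, and such domains can be simply connected). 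The paper closes this step by citing \cite[Theorem 1.1(a)]{O10}, a result proved under the hypothesis that $A_R(f)$ is a spider's web; some such external input is genuinely needed here, and your ``iterate and derive that $U$ becomes multiply connected'' heuristic does not substitute for it.

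A second, smaller flaw is your connectedness argument for $J_r(f)$: density of $J_r(f)$ in $J(f)$ together with connectedness of $J(f)$ does not make $\widetilde{\partial G_k}\cap J_r(f)$ connected (dense subsets of connected sets need not be connected), and Lemma \ref{rs} alone cannot exclude the infinitely-many-components alternative. The paper's fix is cleaner: since $A(f)\subset J(f)$ and $J(f)=\partial A(f)$ by \cite{BH99}, one has the sandwich $A(f)\subset J_r(f)\subset J(f)=\overline{A(f)}$, and $A(f)$ is a spider's web (hence connected) by \cite[Theorem 1.4]{RS10a}; any set lying between a connected set and its closure is connected, and the loops of the $A(f)$ spider's web lie in $J_r(f)$. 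This yields both the connectedness and the loop structure of $J_r(f)$ in one stroke.
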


\begin{proof}
Since $ A(f) \cap F(f) = \emptyset $, there are no \mconn\ Fatou components by \cite[Theorem 4.4]{RS10a}, so $ J(f) $ is a \sw\ by Lemma \ref{fast}.  Furthermore, no point on the boundary of a Fatou component of $ f $ can lie in $ A(f) $ by  
\cite[Theorem 1.1(a)]{O10}.
Thus
\[ A(f) \subset J_r(f) \subset J(f) = \overline{A(f)}, \]
because $ J(f) = \partial A(f) $ \cite{BH99}.  Since $ A(f) $ is a \sw\ by \cite[Theorem 1.4]{RS10a}, it follows that $ J_r(f) $ is connected and indeed is also a \sw. 
\end{proof}

An example of a class of functions for which $ J_r(f) $ is a \sw\ is Baker's construction \cite{iB01} of \tef s of arbitrarily small growth, for which every point in the Fatou set tends to a superattracting fixed point at $ 0 $ under iteration (independently, Boyd \cite{Boy} arrived at a very similar construction).  Clearly $ A(f) \subset J(f) $ for such functions, and it follows from \cite[Theorem 1.9(b)]{RS10a} that $ A_R(f) $ is a \sw.

We remark that, when $ J_r(f) $ is a \sw, we have the following analogue of Theorem \ref{rings} (the proof is very similar and we omit it).

\begin{theorem}
\label{residualrings}
Let $ f $ be a \tef\ such that $ J_r(f) $ is a \sw.  Then there exists a subset of $ J_r(f) $ which is dense in $ J(f) $ and consists of points $ z $ with the property that every \nhd\ of $ z $ contains a continuum in $ J_r(f) $ that surrounds ~$ z $.  At each such point, $ J_r(f) $ is \lconn.     
\end{theorem}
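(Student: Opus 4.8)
The plan is to follow the proof of Theorem~\ref{rings} essentially line by line, replacing $J(f)$ by $J_r(f)$ throughout and using systematically that $J_r(f)$ is completely invariant. First I would invoke the hypothesis that $J_r(f)$ is a \sw\ to fix a sequence $(G_k)_{k\in\N}$ of bounded, \sconn\ domains with pairwise disjoint boundaries such that $\overline{G_k}\subset G_{k+1}$ and $\partial G_k\subset J_r(f)$ for every $k$, and $\bigcup_{k\in\N}G_k=\C$. Note that, being a \sw, $J_r(f)$ is connected and unbounded, so it contains points outside $E(f)$; since preimages of buried points are buried (as $f$ is open), it follows that $J_r(f)$ is dense in $J(f)$, and hence the subset we construct will be dense in $J(f)$ once it is dense in $J_r(f)$.

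Next, exactly as in Theorem~\ref{rings}, I would take a domain $G$ from the sequence meeting $J(f)$ and a point $z\in(G\cap J(f))\setminus E(f)$; Picard's theorem supplies infinitely many preimages of $z$, each lying in a component $W$ of $f^{-1}(G)$, and $\partial W\subset f^{-1}(\partial G)\subset f^{-1}(J_r(f))=J_r(f)$. Choosing three preimages lying in widely separated annuli $G_{k_j+2}\setminus\overline{G_{k_j}}$ as there, one obtains bounded domains $V_1,V_2,V_3$ with pairwise disjoint closures, each meeting $J(f)$, and — since $\partial V_j$ lies in $\partial G_{k_j+2}\cup\partial G_{k_j}\cup\partial W$, all three of which lie in $J_r(f)$ — with $\partial V_j\subset J_r(f)$. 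Enclosing each $\overline{V_j}$ in a bounded Jordan domain $D_j$, with the three closures disjoint, and applying Proposition~\ref{islands}, I get $\mu\in\{1,2,3\}$, $n\in\N$ and $U\subset V_\mu$ with $f^n\colon U\to D_\mu$ conformal. The inverse branch $\phi=f^{-n}\colon D_\mu\to U$ has an attracting fixed point $z_0\in U\subset V_\mu$, which is a repelling periodic point of $f$, so $z_0\in J(f)$, and $\phi^k(\overline{V_\mu})$ shrinks to $\{z_0\}$. As in Theorem~\ref{rings}, for any \nhd\ $N$ of $z_0$ there is $K$ with $\phi^K(\overline{V_\mu})\subset N$, and $\partial\phi^K(V_\mu)=\phi^K(\partial V_\mu)$ is a continuum surrounding $z_0$; the new point is only that $\phi^K$ is a branch of $f^{-Kn}$ and $\partial V_\mu\subset J_r(f)$, so by complete invariance of $J_r(f)$ this continuum lies in $J_r(f)$.

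It remains to deal with density and local connectedness. As in Theorem~\ref{rings}, the available freedom in the choice of $z$ and of the preimages lets me ensure $z_0\notin E(f)$, so that $O^-(z_0)$ is dense in $J(f)$; since $f$ is open and $J_r(f)$ is completely invariant, every $z\in O^-(z_0)$ inherits the property that each of its \nhd s contains a continuum in $J_r(f)$ surrounding $z$, which gives the required dense subset. For local connectedness, if $z$ has this property then the argument of Theorem~\ref{rings} shows $z$ lies on the boundary of no Fatou component, so $z\in J_r(f)$; and given a relative \nhd\ $V=V'\cap J_r(f)$ with $V'$ a disc, a continuum $C\subset J_r(f)\cap V'$ surrounding $z$ yields the relatively open set $\widetilde{C}\cap J_r(f)\subset V$, which is connected because $J_r(f)$ is connected and unbounded, so $J_r(f)$ is \lconn\ at $z$.

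I do not anticipate a substantive obstacle. The only thing requiring care is the bookkeeping that keeps every continuum produced (the boundaries $\partial W$, $\partial V_j$ and $\partial\phi^K(V_\mu)$) inside $J_r(f)$ rather than merely inside $J(f)$, which is exactly what the complete invariance of $J_r(f)$ together with the choice $\partial G_k\subset J_r(f)$ ensures; and the final step, where one needs $J_r(f)$ to be connected and unbounded for $\widetilde{C}\cap J_r(f)$ to be a connected \nhd\ of $z$ — both being immediate from $J_r(f)$ being a \sw.
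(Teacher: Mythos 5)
Your proposal is correct and is exactly the adaptation the paper intends: the paper omits the proof of Theorem~\ref{residualrings}, stating only that it is ``very similar'' to that of Theorem~\ref{rings}, and your line-by-line replacement of $J(f)$ by $J_r(f)$ --- with the complete invariance of $J_r(f)$ keeping $\partial W$, $\partial V_j$ and $\phi^K(\partial V_\mu)$ inside $J_r(f)$, and the connectedness of the spider's web $J_r(f)$ giving the final connectedness of $\widetilde{C}\cap J_r(f)$ --- is that argument. One cosmetic point: $\widetilde{C}\cap J_r(f)$ is not relatively open in $J_r(f)$, but it contains the relatively open set $\mathrm{int}(\widetilde{C})\cap J_r(f)\ni z$, so it is a connected (not necessarily open) neighbourhood of $z$, which is all that the paper's definition of local connectedness requires.
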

  
Finally in this section, we compare our results on $ J_r(f) $ in Theorems \ref{rings} and \ref{burjor} with those obtained by other authors.

Theorem \ref{rings} gives a sufficient condition for a \tef\ to have $ J_r(f) \neq \emptyset $, namely that $ J(f) $ is a \sw.  This complements other sufficient conditions in the literature for $ J_r(f) $ to be non-empty:
\begin{itemize}
\item Baker and Dom\'{i}nguez \cite[Theorem 6]{BD2} showed that $ J_r(f) \neq ~\emptyset $ if $ F(f) $ is not connected, there are no \wand s, and all periodic Fatou components are bounded;
\item Dom\'{i}nguez and Fagella \cite[Proposition 6.1]{DF} proved that, if all Fatou components eventually iterate inside a closed set $ A \subsetneq \C $ with non-empty interior and never leave again, then $ J_r(f) \neq \emptyset $ provided the complement of $ A $ meets $ J(f) $. 
\end{itemize}
 
Theorem \ref{burjor} gives us, in particular, that $ J_r(f) \neq \emptyset $ whenever $ f $ is a \tef\ such that $ J(f) $ is \lconn.  For a general \tef, this result appears to be new.  However, for \tef s in the class $ \mathcal{S} $, it is implied by a result of Ng, Zheng and Choi \cite[Theorem 2.1]{NZC}.

We remark that, for each of the examples given in Section \ref{examples} below, it is immediate from our results that the residual Julia set is not empty.  This has already been proved explicitly for some of the functions or classes of functions discussed - see, for example, \cite[Corollary 6.5]{DF}, \cite[Theorem 6]{Mor} and \cite[Proposition 7.1]{NZC}.

\section{Examples} 
\label{examples}
\setcounter{equation}{0} 

In this section we give a number of examples which illustrate the results of previous sections.   

First, we consider \tef s for which the Julia set is a \sw.  We describe a large class of such functions, based on the work of Rippon and Stallard in \cite{RS10a}.  We also show that the Julia set can be a \sw\ for functions outside this class, by proving that $ J(g) $ is a \sw\ when $ g(z) = \sin z $.  For each of these functions, it follows from Theorem \ref{rings} that the Julia set is \lconn\ at a dense subset of buried points. 

In \cite{RS10a}, Rippon and Stallard discussed the properties of the set $ A_R(f) $ defined in Section \ref{prelim}, and in \cite[Theorem 1.9]{RS10a} gave many examples of functions for which $ A_R(f) $ is a \sw.  These examples include functions with

\begin{myindentpar}{1cm}
\begin{enumerate}[(a)]
\item very small growth,
\item order less than $ \tfrac{1}{2} $ and regular growth,
\item finite order, Fabry gaps and regular growth, or
\item a sufficiently strong version of the pits effect, and regular growth.
\end{enumerate}
\end{myindentpar}

The terminology used here is defined and made precise in \cite{RS10a}.  Mihaljevi\'{c}-Brandt and Peter \cite{MP1}, and Sixsmith \cite{S1}, have given further classes of \tef s for which $ A_R(f) $ is a \sw.

For each of these functions, it follows from Lemma \ref{fast} that $ J(f) $ is a \sw\ whenever $ f $ has no \mconn\ Fatou components.  Note that the escaping set $ I(f) $ is also a \sw\ for these functions, by \cite[Theorem 1.4]{RS10a}. 

Now a \tef\ such that $ A_R(f) $ is a \sw\ can never belong to the class $ \mathcal{S} $ or the class $ \mathcal{B} $, by \cite[Theorem 1.8]{RS10a}.  However, $ J(f) $ can still be a \sw\ in these circumstances, as we now show.

The function $ g(z) = \sin{z} $ has been the subject of a number of studies \cite{BD1, PD1, DS}.  In particular,  Dom\'{i}nguez proved in \cite{PD1} that $ J(g) $ is connected, and Baker and  Dom\'{i}nguez showed in \cite{BD1} that $ J(g) $ is \lconn\ at the fixed point $ 0 $.  We now prove that $ J(g) $ is a \sw, and also show that neither the escaping set $ I(g) $ nor the residual Julia set $ J_r(g) $ is a \sw.

We will need the following result (see, for example, \cite[Theorem 7.9]{Con}). 

\begin{lemma}[part of the Koebe Distortion Theorem] 
\label{distort}
Let $ f $ be a function that is univalent on the unit disc with $ f(0) = 0 $ and $ f'(0) = 1 $.  Then, if $ \vert z \vert < 1 $,
\[ \vert f(z) \vert \leq \dfrac{\vert z \vert}{(1 - \vert z \vert)^2}.  \] 
\end{lemma}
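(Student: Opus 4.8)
The plan is to deduce the growth estimate from Bieberbach's coefficient inequality $|a_2|\le 2$ for the normalised univalent (schlicht) functions, by way of the Koebe transform and an integration along a radius. Write $f(z)=z+a_2z^2+a_3z^3+\cdots$ on $\D$.

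First I would establish $|a_2|\le 2$. Since $f(z^2)/z^2$ is holomorphic and non-vanishing on $\D$, it has a holomorphic square root, so $h(z)=\sqrt{f(z^2)}=z+\tfrac12 a_2 z^3+\cdots$ is a well-defined odd holomorphic function on $\D$; it is univalent, because $h(z_1)=h(z_2)$ forces $f(z_1^2)=f(z_2^2)$, hence $z_1^2=z_2^2$ by univalence of $f$, and oddness of $h$ excludes $z_1=-z_2\ne z_1$. Then $G(w)=1/h(1/w)=w-\tfrac12 a_2 w^{-1}+\cdots$ is univalent on $\{|w|>1\}$, and the area theorem (the complement of $G(\{|w|>1\})$ has non-negative area) gives $\sum_{n\ge1}n|b_n|^2\le1$; keeping only the $n=1$ term yields $|a_2|\le2$.

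Next, for a fixed $z\in\D$ I would apply this bound to the Koebe transform
\[
K_z(w)=\frac{f\!\left(\dfrac{w+z}{1+\bar z w}\right)-f(z)}{(1-|z|^2)\,f'(z)},
\]
which again lies in the schlicht class, since $w\mapsto(w+z)/(1+\bar z w)$ is an automorphism of $\D$ and the affine correction restores the normalisations. Expanding $K_z$ to second order at $w=0$ and imposing $|a_2(K_z)|\le2$ produces the estimate
\[
\left|(1-|z|^2)\,\frac{f''(z)}{f'(z)}-2\bar z\right|\le4 .
\]
Writing $z=re^{i\theta}$, multiplying through by $z/\bigl(|z|(1-|z|^2)\bigr)$ and taking real parts turns this into $\bigl|\partial_r\log|f'(re^{i\theta})|-2r/(1-r^2)\bigr|\le 4/(1-r^2)$; integrating the upper bound $\partial_r\log|f'|\le(2r+4)/\bigl((1-r)(1+r)\bigr)$ from $0$ to $r$ and using $f'(0)=1$ gives $|f'(z)|\le(1+r)/(1-r)^3$.

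Finally I would integrate along the segment from $0$ to $z$:
\[
|f(z)|=\left|\int_0^1 z\,f'(tz)\,dt\right|\le\int_0^{|z|}\frac{1+t}{(1-t)^3}\,dt=\frac{|z|}{(1-|z|)^2},
\]
the last equality by the substitution $u=1-t$. I expect the main obstacle to be the first block, Bieberbach's inequality, which relies on the non-trivial facts that $\sqrt{f(z^2)}$ is single-valued and univalent and that the area theorem applies; the subsequent differential-inequality manipulation and the final integration are routine. Since the estimate is classical, in the paper we simply invoke it in the form given in \cite[Theorem 7.9]{Con}.
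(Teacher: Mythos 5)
Your argument is correct: the chain area theorem $\Rightarrow$ Bieberbach's $|a_2|\le 2$ $\Rightarrow$ Koebe transform and the differential inequality for $\partial_r\log|f'|$ $\Rightarrow$ the distortion bound $|f'(z)|\le(1+r)/(1-r)^3$ $\Rightarrow$ radial integration giving $|f(z)|\le |z|/(1-|z|)^2$ is exactly the classical proof of the growth theorem, and all the coefficient computations and the final integral check out. The paper offers no proof of this lemma at all, merely citing \cite[Theorem 7.9]{Con}, and your write-up is essentially the standard argument found in that reference, so there is nothing to reconcile.
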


\begin{example}
\label{one}
Let $ g(z) = \sin{z}.$  Then $ J(g) $ is a \sw, but neither $ I(g) $ nor $ J_r(g) $ is a \sw.
\end{example}

\begin{proof}
We first recall some basic facts about the Fatou components of $ g $ from \cite{BD1, PD1}. It is clear that $ g \in \mathcal{S} $, and that the singular values of $ g $ are the two critical values at $ \pm 1. $ The fixed point $ 0 $ lies on the boundary of two invariant, parabolic Fatou components, which are reflections of one another in the imaginary axis, and in each of which $ g^n(z) \to ~0 $ as $ n \to \infty. $  

Label these components $ D_0 $ and $ D_{-1} $, where $ D_0 $ meets the positive real axis and $ D_{-1} $ is its reflection in the imaginary axis. Then $ D_0 $ and $ D_{-1} $ are bounded, and are the only two periodic Fatou components, each containing the entire orbit of one of the critical values. Furthermore, every other Fatou component is a preimage of either $ D_0 $ or $ D_{-1} $ under $ g^m $ for some $ m \in \N $, and the components of $ g^{-1}(D_0) $ and $ g^{-1}(D_{-1}) $ all have the form
\[ D_n = \lbrace z + n \pi : z \in D_0, n \in \Z \rbrace. \]

We claim that the diameters of all of the components of $ F(g) $ are uniformly bounded. 

To prove the claim, we begin by using ideas from Baker and Dom\'{i}nguez' proof of Theorem F in \cite{BD1}.  There it is shown that, apart from the point $ 0 $, the lemniscate $ \vert z^2 - 1 \vert = 1 $ lies in $ D_0 \cup D_{-1} $. If $ h $ is any branch of $ g^{-1} $, a straightforward calculation therefore shows that $ \vert h'(z) \vert < 1 $ outside the lemniscate, and hence in any component of $ F(g) $ other than $ D_0 $ and $ D_{-1} $.

Now let $ U $ be any Fatou component of $ g $ other than a component of $ g^{-1}(D_0) $ or $ g^{-1}(D_{-1}) $.  Then there exists $ m \geq 1 $ and $ n \neq 0, -1 $ such that $ g^m(U) =  D_n. $  Furthermore, because the orbits of both critical values lie entirely in the real interval $ [-1, 1] $, the branch $ \phi $ of $ g^{-m} $ mapping $ D_n $ to $ U $ is univalent in some domain $ G $ containing $ \overline{D}_n $. Now no component of $ g^{-k}(D_n) $ for $ k \in \lbrace 1, \ldots, m \rbrace $ meets $ D_0 \cup D_{-1} $, since $ D_0 $ and $ D_{-1} $ are invariant.  Therefore, since $ \phi $ is a composition of branches $ h $ of $ g^{-1} $, for each of which $ \vert h'(z) \vert < 1 $ outside $ D_0 \cup D_{-1} $, it  follows that $ \vert \phi'(z) \vert < 1 $ throughout $ D_n. $   

Now, following ideas from the proof in \cite[Theorem 7.16]{Con}, let $ d $ be such that $ 0 < 2d < \mathrm{dist}(D_n, \partial G) $, and cover the compact set $ \overline{D}_n $ by a finite collection $ \mathbf{B} $ of open discs of radius $ d / 8, $ each of which meets  $ \overline{D}_n $.   Let $ B_1, B_2 $ be two discs from this collection with non-empty intersection, and let $ z_1 \in B_1 \cap D_n $ and $ z_2 \in B_2 \cap D_n $.  Then we have $ \vert z_1 - z_2 \vert < d/2 $, and $ B_1 \cup B_2 \subset \overline{B}(z_1, d) \subset G $.

Now the function
\[ \psi(z) = \dfrac{\phi(z_1 + dz) - \phi(z_1)}{d \phi'(z_1)} \]
is univalent in the unit disc, with $ \psi(0) = 0 $ and $ \psi'(0) = 1 $. Thus it follows from Lemma \ref{distort} that
\[ \left \vert \dfrac{\phi(z_1 + dz) - \phi(z_1)}{d \phi'(z_1)} \right \vert \leq \dfrac{\vert z \vert}{(1 - \vert z \vert)^2} \]
for $ \vert z \vert < 1. $  If we now put $ z = (z_2 - z_1) / d $, so that $ \vert z \vert < 1/2 $, and use the fact that $ \vert \phi'(z) \vert < 1 $ throughout $ D_n $, we obtain
\[ \vert \phi(z_2) - \phi(z_1) \vert \leq 2d. \]
Now let $ z, w $ be arbitrary points in $ D_n $.  Then there are points $ z = z_1, z_2, \ldots, z_k = w $ in $ D_n $, with $ z_i \in B_i \in \mathbf{B} $ for $ i = 1, \ldots, k $, where each consecutive pair of discs has non-empty intersection.  It follows that
\[ \vert \phi(z) - \phi(w) \vert \leq \sum_{j = 1}^{k - 1} \vert \phi(z_j) - \phi(z_{j + 1}) \vert \leq 2(k -1)d < 2Kd,  \]
where $ K $ is the total number of discs in $ \mathbf{B} $.  Thus the diameter of $ U $ is at most $ 2Kd. $
 
Furthermore, since the Fatou components $ D_n $ are congruent for all $ n \in \Z , n \neq 0, -1, $ we can use the same value of $ d $ and congruent open covers whatever the value of $ n $.  Since $ D_0 $ and $ D_{-1} $ are bounded, this completes the proof of the claim.
 
Now let $ \rho > 0, $ and let $ \lbrace U_j : j \in \N \rbrace $ be the collection of components of $ F(g) $ that meet the circle $ C(0, \rho). $  Then it follows from the claim just proved that the set $ \bigcup_{j \in \N} \overline{U}_j $ is bounded.  If we now let
\[ X = C(0, \rho) \cup \bigcup_{j \in \N} \overline{U}_j , \]
and put
\[ G = \text{int}(\widetilde{X}), \]
we then have that $ G $ is a bounded, \sconn\ domain whose boundary $ \partial G $ lies in $ J(g). $

We can now proceed exactly as in the proof of Theorem \ref{no unbounded}, and construct a sequence $ (G_k)_{k \in \N} $ of bounded, \sconn\ domains such that $ G_{k+1} \supset G_k $ and $ \partial G_k \subset J(g) $ for each $ k \in \N $, and $ \bigcup_{k \in \N} G_k = \C $.  Since we know that $ J(g) $ is connected, it follows that $ J(g) $ is a \sw.

Finally, we note that $ g $ maps the real line onto the interval $ [-1, 1] $, so that there are no points on the real line that escape to infinity under iteration.  Furthermore, all points on the real line are in the Fatou set, except for the points $ \lbrace z = n \pi : n \in \Z \rbrace $, which each lie on the boundaries of two adjacent Fatou components.  This shows that neither $ I(g) $ nor $ J_r(g) $ is a \sw.
\end{proof}

Recall that, by Theorem \ref{rings}, the Julia set for $ g(z) = \sin{z} $ is \lconn\ at a dense subset of buried points.  This adds to the result of Baker and Dom\'{i}nguez \cite{BD1} that $ J(g) $ is \lconn\ at the fixed point $  0 $ and its preimages (which are not buried points).  However, it seems to be an open question whether $ J(g) $ is everywhere \lconn.

We now briefly review the conditions under which it is known that a \tef\ has a \lconn\ Julia set and give a number of examples from the literature of functions with this property.  We also use results from the literature to derive some further examples.  For the functions in each of these examples, it follows from Theorem \ref{burjor} that the Julia set is a \sw\ containing a sequence of loops $ (\partial G_k)_{k \in \N} $ which are Jordan curves and which are the boundaries of a sequence of bounded, \sconn\ domains $ (G_k)_{k \in \N} $ satisfying (\ref{web}).

For rational maps, it has long been known that the local connectedness of the Julia set is related to the orbits of the critical points of the map (its \textit{critical orbits}).  A rational map $ R $ is \textit{hyperbolic} if the closure of the union of its critical orbits is disjoint from $ J(R) $ and, for such a map, if $ J(R) $ is connected then it is also \lconn.  The related, but weaker, concepts of subhyperbolic, semihyperbolic and geometrically finite rational maps have also been investigated, and for these maps too, if the Julia set is connected then it is \lconn.  We refer to \cite[Chapter 19]{Mil} and to \cite{CJY, MH, TY}.   

Attempts to extend these ideas to \tef s have had some success.  For example, the following result in this direction is a version of a theorem stated by Morosawa \cite[Theorem 2]{Mor}.  

\begin{lemma}
\label{mor}
Let $ f $ be a \tef\ in the class $ \mathcal{S} $ and such that each component of $ F(f) $ contains at most finitely many critical points.  Assume further that all cyclic components of $ F(f) $ are bounded. Then $ J(f) $ is \lconn\ if the following two conditions hold:
\begin{enumerate}[(1)]
\item if $ \zeta \in F(f) \cap \mathrm{sing}(f^{-1}) $, then $ \zeta $ is a critical value and is absorbed by an attracting cycle;
\item if $ \zeta \in J(f) \cap \mathrm{sing}(f^{-1}) $, then for any Fatou component $ D $ we have
\[ \overline{ \bigcup_{n \geq 0} f^n (\zeta) } \cap \partial D = \emptyset. \]
\end{enumerate}
\end{lemma}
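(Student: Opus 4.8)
The plan is to verify Whyburn's criterion (Lemma~\ref{whyburn}) for the plane continuum $\wh{J}(f) := J(f)\cup\lbrace\infty\rbrace$. First I would dispose of the structural preliminaries. Since $\mathcal{S}\subset\mathcal{B}$, the function $f$ has no \mconn\ Fatou components and no Baker domains, and, being in $\mathcal{S}$, it has no \wand s, so every Fatou component is a preimage of a cyclic one. Writing $P=\overline{\bigcup_{n\geq 0}f^n(\mathrm{sing}(f^{-1}))}$ for the postsingular set, conditions (1) and (2) together imply that $P$ is disjoint from $\partial U$ for every Fatou component $U$: by (1) the $F(f)$-part of the orbit of each singular value lies in an attracting cycle, a compact subset of $F(f)$, while (2) keeps its $J(f)$-part off every $\partial U$. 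Since the boundary of a Siegel disc lies in $P$, and a parabolic basin has its parabolic point both in $P$ and on its boundary, there are no Siegel discs and no parabolic basins; hence every cyclic component is an immediate (super)attracting basin, and these are finite in number and bounded by hypothesis. Using (1) and the class-$\mathcal{S}$ structure one checks that there is then no unbounded Fatou component at all (consistently with Corollary~\ref{unbounded}), so every Fatou component is bounded and \sconn. Then $J(f)$ is connected by Lemma~\ref{kisaka}, $\wh{J}(f)$ is a plane continuum, and by Lemma~\ref{whyburn} it remains to prove: \emph{(a)} $\partial U$ is \lconn\ for every Fatou component $U$; and \emph{(b)} for every $\eps>0$, at most finitely many Fatou components have spherical diameter exceeding $\eps$.

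The engine for both parts is an expansion estimate, which is what the disjointness $P\cap\bigcup_U\partial U=\emptyset$ buys us (``$f$ is subhyperbolic near $J(f)$''). After removing small linearising, respectively B\"{o}ttcher, discs about the attracting cycle points, I would build from $P$ a closed forward-invariant set $Q\ni\infty$ such that $Q\cap\partial U=\emptyset$ for all $U$, with $\wh{\C}\setminus Q$ hyperbolic and carried into itself by every branch of $f^{-1}$. Away from the removed discs every inverse branch of $f$ then strictly contracts the hyperbolic metric of $\wh{\C}\setminus Q$, while on those discs the pull-back dynamics is manifestly contracting; gluing gives a conformal metric near $J(f)$ for which $f$ is uniformly expanding on compacta. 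An iterated-contraction argument then shows the spherical diameters of the pull-backs of a fixed compact set tend to $0$, so a Fatou component obtained by pulling a cyclic one back through many iterates of $f$ is spherically small; combined with a finiteness count for the ``close-in'' components (those reached in boundedly many iterates) this gives~(b).

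For (a), each cyclic component $D_j$ is bounded, \sconn, and contains only finitely many critical points, so, passing to $f^p$ where $p$ is the period, $f^p$ restricts to a proper holomorphic self-map of $D_j$ of finite degree; via the Riemann map this is conjugate to a finite Blaschke product with an attracting fixed point, and the local connectedness of $\partial D_j$ reduces to the classical hyperbolic polynomial-like case, which is itself a consequence of the expansion above. An arbitrary Fatou component $U$ is carried onto some $D_j$ by a proper map of finite degree, and the standard fact (a Lindel\"{o}f-type cluster-set argument) that a finite-degree proper preimage of a bounded \sconn\ domain with \lconn\ boundary again has \lconn\ boundary completes~(a), and with it the Lemma.

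I expect the main obstacle to be the construction of $Q$ and the quantitative expansion it supports. In the transcendental setting $P$ may cluster densely on $J(f)$ and $\infty$ is an essential singularity, so the rational-map ``expanding orbifold metric near $J$'' is not available off the shelf; it is precisely condition (2) that lets one thicken the picture along the Fatou-component boundaries, and turning this into uniform strict contraction on the relevant compacta, together with controlling how many preimage components can be spherically large, is the delicate part. Once that is in place, the remainder is a fairly routine adaptation of the standard hyperbolic/subhyperbolic arguments for local connectedness of Julia sets.
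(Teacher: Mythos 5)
The paper does not prove this statement: Lemma~\ref{mor} is quoted from Morosawa \cite[Theorem 2]{Mor}, with one hypothesis added, and the only proof-related content in the paper is the Remark immediately following it, which explains why the added hypothesis (each Fatou component contains at most finitely many critical points) is necessary. So there is no in-paper argument to compare yours against; I can only measure your sketch against that Remark and against the standard subhyperbolic argument it alludes to. Your overall architecture --- reduce to Whyburn's criterion, eliminate wandering, Baker, Siegel, parabolic and multiply connected components via the class-$\mathcal{S}$ and postsingular hypotheses, then extract both the diameter decay and the local connectedness of individual boundaries from an expanding metric away from the postsingular set --- is the right one, and your classification of the periodic components and your cluster-set argument for proper finite-degree preimages are fine.

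The genuine gap is the sentence ``Using (1) and the class-$\mathcal{S}$ structure one checks that there is then no unbounded Fatou component at all.'' That deduction is precisely the flawed step in Morosawa's original proof, and it is the entire reason the lemma as stated here carries the extra hypothesis about critical points. For $f\in\mathcal{S}$, a component $U$ of $f^{-1}(D)$, where $D$ is a bounded cyclic component whose closure avoids the finitely many asymptotic values, is indeed mapped properly onto $D$; but if $U$ contains infinitely many critical points then, since critical points of a nonconstant entire function cannot accumulate at a finite point, $U$ is forced to be unbounded, and the induction ``bounded components have bounded preimages'' collapses. You must invoke the finitely-many-critical-points hypothesis at exactly this point (proper map with finitely many critical points $\Rightarrow$ finite degree $\Rightarrow$ bounded preimage component), not only later when you conjugate $f^p|_{D_j}$ to a Blaschke product. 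Everything downstream --- the expansion estimate, part (b), and the Lindel\"{o}f-type argument in part (a), which needs compact cluster sets --- depends on all Fatou components being bounded, so this is not cosmetic. The other soft spot, the construction of the forward-invariant set $Q$ and a uniformly expanding metric near $J(f)$ when the postsingular set may be unbounded and accumulate on $J(f)$, you have rightly flagged yourself; as written it remains a plan rather than a proof.
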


\begin{remark}
In \cite[Theorem 2]{Mor} it was assumed only that $ f $ is in the class $ \mathcal{S} $, and the additional assumption in Lemma \ref{mor} that each component of $ F(f) $ contains at most finitely many critical points was omitted.  The proof of \cite[Theorem ~2]{Mor} requires the deduction that if the closure of a bounded component of $ F(f) $ contains no asymptotic value of $ f $, then all the components of its preimages are bounded.  The author is grateful to the referee for pointing out that this deduction requires a stronger hypothesis than that $ f $ is in the class $ \mathcal{S} $, since a preimage could contain infinitely many critical points (in which case it must be unbounded).  
\end{remark}
 
Using this result, Morosawa gave the following examples of \tef s in the class $ \mathcal{S} $ for which the Julia set is \lconn\ (note that in each of these examples the function has only one critical point):

\begin{itemize}
\item $ f_\lambda(z) = \lambda z e^z, $ where $ \lambda $ is such that $ f_\lambda $ has an attracting cycle whose period is greater than one, and satisfies $ \vert \mathrm{Im } (\lambda) \vert \geq e \mathrm{Arg } (\lambda) $ \cite[Theorem 5]{Mor}. \\
\item  $ g_a(z) = ae^a(z - (1 - a))e^z, $ where $ a > 1 $ \cite[Theorem 7]{Mor}.  
\end{itemize}

Indeed, Morosawa showed that $ J(g_a) $ is homeomorphic to the \textit{Sierpi\'{n}ski curve} continuum, i.e. that it is a nowhere dense subset of $ \widehat{\C} $ which is closed, connected and \lconn, and has the property that the boundaries of any two of its complementary components are disjoint Jordan curves \cite{Why2}.  It is a characteristic of the Sierpi\'{n}ski curve that it contains a homeomorphic copy of every one-dimensional plane continuum.  This was explored by Garijo, Jarque and Moreno Rocha \cite{GJR}, who have made a detailed study of the function $ g_a $, and demonstrated the existence of indecomposable continua in its Julia set.   

We note that, whenever the Julia set of a \tef\ is homeomorphic to the Sierpi\'{n}ski curve, it must necessarily also be a \sw\ by Theorem \ref{lcsw}. 

We now use Lemma \ref{mor} to give the following additional example of a \tef\ in the class $ \mathcal{S} $ for which the Julia set is \lconn.  The example is based on work by Dom\'{i}nguez and Fagella \cite{DF}, though they did not discuss local connectedness.   

\begin{example}
\label{two}
Let $ f(z) = \lambda \sin z, $ where $ \lambda \in \C $ is chosen so that there are two attracting cycles and is such that $ \vert \mathrm{Re } (\lambda) \vert \geq \tfrac{\pi}{2}. $ Then $ J(f) $ is \lconn. 
\end{example}

\begin{proof}  
It is shown in \cite[Proposition 6.3]{DF} that all the Fatou components of $ f $ are bounded (note that each Fatou component contains at most one critical point).  Clearly $ f \in \mathcal{S} $, and the singular values of $ f $ are the two critical values $ \pm \lambda. $   By the choice of $ \lambda $, each critical value is absorbed by an attracting cycle and it follows that $ J(f) \cap \text{sing}(f^{-1}) = \emptyset. $  Thus conditions (1) and (2) in Lemma \ref{mor} hold. 
\end{proof}

Under certain conditions, the Julia set is also \lconn\ for the class of semihyperbolic entire functions investigated by Bergweiler and Morosawa in \cite{BM02}.  

A \tef\ $ f $ is \textit{semihyperbolic} at $ a \in J(f) $ if there exist $ r > 0 $ and $ N \in \N $ such that, for all $ n \in \N $ and all components $ U $ of $ f^{-n}(B(a,r)) $, the function $ f^n \vert_U : U \to B(a,r) $ is a proper map of degree at most $ N. $  

Bergweiler and Morosawa's result on local connectedness is the following.

\begin{lemma}[Theorem 4 in \cite{BM02}]
\label{bergmor}
Let $ f $ be entire.  Assume that $ F(f) $ consists of finitely many attracting basins.  Suppose that if $ U $ is an immediate attracting basin, then $ U $ is bounded, $ f $ is semihyperbolic on $ \partial U $, and there exists $ N \in \N $ such that for every $ n \in \N $ and for every component $ V \neq U $ of $ f^{-n}(U) \setminus \bigcup_{k = 0}^{n - 1} f^{-k}(U) $ we have $ \mathrm{deg}(f^n \vert_V : V \to U) \leq N. $  Then $ J(f) $ is \lconn.
\end{lemma}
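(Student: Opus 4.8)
This is Theorem 4 of Bergweiler and Morosawa \cite{BM02}, so the plan is simply to follow their argument; I only sketch its main ingredients here. The natural route is via Whyburn's criterion, Lemma \ref{whyburn}: once $J(f)$ is known to be connected, it suffices to show (a) that the boundary of every Fatou component is \lconn, and (b) that for each $\eps>0$ at most finitely many Fatou components have spherical diameter greater than $\eps$.

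First I would establish connectedness of $J(f)$. Since $F(f)$ is a union of finitely many attracting basins there are no \wand s, so every Fatou component is \sconn; the immediate basins are bounded by hypothesis, and a normality argument exploiting the semihyperbolicity of $f$ on the boundary of each immediate basin shows that their preimages are bounded as well, so Lemma \ref{kisaka} applies. For (b), the finitely many immediate basins are bounded, so only their preimages need controlling, and this is where the hypotheses do the work: from the semihyperbolicity on $\partial U$ and the uniform bound $N$ on $\mathrm{deg}(f^n\vert_V : V\to U)$, a Koebe-type distortion estimate shows that the inverse branches of $f^n$ carrying $U$ onto a preimage component $V$ contract uniformly once $n$ is large, so that $\mathrm{diam}(V)\to 0$ as the depth of $V$ grows, leaving only finitely many $V$ large for each $\eps$. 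For (a), the semihyperbolicity of $f$ on each $\partial U$ forces the Riemann map of $U$ to extend continuously to the closed disc --- a Carath\'eodory-type argument in which semihyperbolicity supplies the needed shrinking of the preimages of crosscuts --- so each $\partial U$ is \lconn; and since every other Fatou component maps onto some immediate basin $U$ by a proper map of degree at most $N$, the local connectedness of $\partial U$ transfers to the boundary of the preimage by the standard fact that proper maps of bounded degree preserve local connectedness of boundaries.

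The principal obstacle, exactly as in the rational setting treated by Carleson, Jones and Yoccoz \cite{CJY}, is the uniform contraction estimate in (b): near critical points of $f$ lying in $J(f)$ there is no pointwise Euclidean expansion, and the estimate must instead be extracted from a scale-by-scale distortion analysis along orbits that uses the bounded-degree hypothesis in an essential way --- this is the technical heart of \cite{BM02}, and the reason neither the semihyperbolicity nor the degree condition can be dropped. A secondary, smaller point is the continuous extension of the boundary map in (a), which is likewise not automatic and relies on the semihyperbolicity assumption.
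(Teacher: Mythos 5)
This lemma is quoted verbatim from Bergweiler and Morosawa \cite{BM02} and the paper offers no proof of it, so there is no internal argument to compare your sketch against. Your outline --- Whyburn's criterion (Lemma \ref{whyburn}), connectedness via Lemma \ref{kisaka}, shrinking of preimage components from semihyperbolicity together with the uniform degree bound in the spirit of \cite{CJY}, and a Carath\'eodory-type extension argument for the boundaries of the immediate basins --- is a fair summary of the strategy of \cite{BM02}, but as written it is a roadmap rather than a proof: the uniform contraction estimate, which you correctly identify as the technical heart, is only named, not established. One small correction: the boundedness of the preimage components of an immediate basin $U$ is forced by the hypothesis that $f^n\vert_V : V \to U$ is a proper map of finite degree (an unbounded component of $f^{-n}(U)$ with $U$ bounded cannot carry such a map), rather than by a normality argument exploiting semihyperbolicity.
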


Using this result, Bergweiler and Morosawa gave the following example of a \tef\ with a locally connected Julia set which is in the class $ \mathcal{B} $ but not in the class $ \mathcal{S} $, i.e. the set $ \text{sing}(f^{-1}) $ is bounded but infinite. 

\begin{itemize}
\item There exists $ A $ such that, if $ \pi^2 < a < A, $ and
\[ f(z) = \dfrac{az}{\pi^2 - 4z} \cos{\sqrt{z}}, \]
 then $ f $ has an attracting fixed point such that $ F(f) $ consists of its basin, and the other conditions of Lemma \ref{bergmor} also hold \cite[Example 2]{BM02}.
\end{itemize}

We have now seen examples of functions in both $ \mathcal{S} $ and in $ \mathcal{B} \setminus \mathcal{S} $ which have \lconn\ Julia sets.  It is natural to ask for an example of a \tef\ $ f $ for which $ A_R(f) $ is a \sw\ (so that $ f $ is in neither $ \mathcal{S} $ nor $ \mathcal{B} $) and $ J(f) \neq \C $ is \lconn.  We end this paper by using Lemma \ref{bergmor} to give such an example.  

\begin{example}
\label{three}
Let $ f $ be in the class of \tef s of arbitrarily small growth constructed by Baker in \cite{iB01}, for which every point in the Fatou set tends to a superattracting fixed point at $ 0 $ under iteration.  Let $ R>0 $ be such that $ M(r,f) > r $ for $ r \geq R. $  Then $ A_R(f) $ is a \sw\ and $ J(f) $ is \lconn. 
\end{example}

\begin{proof}  
It follows from \cite[Theorem 1.9(b)]{RS10a} that $ A_R(f) $ is a \sw.  Furthermore, it is shown in \cite{iB01} that
\begin{enumerate}
\item each component of $ F(f) $ is bounded,
\item $ f^n(z) \to 0  $ as $ n \to \infty $, for each $ z \in F(f) $,
\item $ f $ has no finite asymptotic values, and
\item each of the critical points of $ f $, other than $ 0 $, lies in the escaping set $ I(f) $.
\end{enumerate} 

Let $ P(f) $ be the postcritical set of $ f $, that is
\[ P(f) = \lbrace f^n(\zeta) : \zeta \text{ is a critical value of } f, n \geq 0 \rbrace. \]
Then it can be shown using Baker's construction that, if $ U_0 $ is the immediate basin of the superattracting fixed point $ 0 $, there is a \nhd\ $ G $ of $ U_0 $ such that $ \overline{P(f)} \cap G = \lbrace 0 \rbrace $, and moreover that if $ U \neq U_0 $ is any other component of $ F(f) $, there is a \nhd\ $ G' $ of $ U $ such that $ \overline{P(f)} \cap G' = \emptyset. $  We omit the details.  

It follows in particular that
\begin{itemize}
\item $ f $ is semihyperbolic on $ \partial U_0 $, and
\item for each Fatou component $ U $, there exists $ n \in \N $ such that $ f^n(U) = U_0 $ and $ f^n \vert_U : U \to U_0 $ is univalent.
\end{itemize} 
Since $ F(f) $ consists of a single attracting basin, and $ U_0 $ is bounded, the conditions of Lemma \ref{bergmor} are satisfied.  Thus $ J(f) $ is \lconn.
\end{proof}

\begin{remark}
An alternative approach to proving the local connectedness of $ J(f) $ in Example \ref{three} would be to use Lemma \ref{whyburn}.  It follows from \cite[Theorem 1.5]{O10} that the boundary of every Fatou component of $ f $ is a Jordan curve, and a distortion argument can be used to show that, for each $ \varepsilon > 0 $, at most finitely many Fatou components have spherical diameters greater than $ \varepsilon $.
\end{remark}

\textbf{Acknowledgements} The author wishes to express his thanks to his doctoral supervisors, Prof P.J. Rippon and Prof G.M. Stallard, for their constant encouragement and especially for their assistance in the preparation of this paper, and to the referee for helpful comments which led to a number of improvements.

\end{document}